\documentclass{amsart}[12pt]
\def\doctype{}

\usepackage{latexsym,amssymb,dsfont}
\usepackage{tikz}
\usepackage{color}
\usepackage{fancyhdr}
\usepackage{hyperref}
\hypersetup{
colorlinks=true,
allcolors=blue}

\newcommand\Q{\mathbb{Q}}

\newcommand\perm[1]{{\tt #1}}

\newcommand{\comment}[1]{}

\numberwithin{equation}{section}

\fancypagestyle{titlepage}{
\fancyhead[R]{\doctype}
\fancyhead[CL]{}
\cfoot{\vspace{5pt} \thepage}
}

\let\oldsection\section
\newcommand\boldsection[1]{\oldsection{\bf #1}}
\newcommand\starsection[1]{\oldsection*{\bf #1}}
\makeatletter
\renewcommand\section{\@ifstar\starsection\boldsection}
\makeatother

\newtheoremstyle{theorem}
  {12pt}		  
  {0pt}  
  {\sl}  
  {\parindent}     
  {\bf}  
  {. }    
  { }    
  {}     
\theoremstyle{theorem}
\newtheorem{theorem}{Theorem}[section]  
\newtheorem{lemma}[theorem]{Lemma}     

\newtheorem{prop}[theorem]{Proposition}

\newtheoremstyle{definition}
  {12pt}		  
  {0pt}  
  {}  
  {\parindent}     
  {\bf}  
  {. }    
  { }    
  {}     
\theoremstyle{definition}

\newtheorem{ex}[theorem]{Example}

\renewcommand{\proofname}{Proof}

\makeatletter
\renewenvironment{proof}[1][\proofname]{\par
  \pushQED{\qed}%
  \normalfont \partopsep=\z@skip \topsep=\z@skip
  \trivlist
  \item[\hskip\labelsep
        \scshape
    #1\@addpunct{.}]\ignorespaces
}{%
  \popQED\endtrivlist\@endpefalse
}
\makeatother

\makeatletter
\renewcommand*\@maketitle{%
  \normalfont\normalsize
  \@adminfootnotes
  \@mkboth{\@nx\shortauthors}{\@nx\shorttitle}%
  \global\topskip42\p@\relax 
  \@settitle
  \ifx\@empty\authors \else {\vskip 1em
\vtop{\centering\shortauthors\@@par}} \fi
  \ifx\@empty\@date \else {\vskip 1em \vtop{\centering\@date\@@par}}\fi 
  \ifx\@empty\@dedicatory
  \else
    \baselineskip18\p@
    \vtop{\centering{\footnotesize\itshape\@dedicatory\@@par}%
      \global\dimen@i\prevdepth}\prevdepth\dimen@i
  \fi
  \@setabstract
  \normalsize
  \if@titlepage
    \newpage
  \else
    \dimen@34\p@ \advance\dimen@-\baselineskip
    \vskip\dimen@\relax
  \fi
} 
\renewcommand*\@adminfootnotes{%
  \let\@makefnmark\relax  \let\@thefnmark\relax
  \ifx\@empty\@subjclass\else \@footnotetext{\@setsubjclass}\fi
  \ifx\@empty\@keywords\else \@footnotetext{\@setkeywords}\fi
  \ifx\@empty\thankses\else \@footnotetext{%
    \def\par{\let\par\@par}\@setthanks}%
  \fi
\thispagestyle{titlepage}
}
\makeatother

\makeatletter \let\c@table\c@figure \makeatother

\begin{document}

\title[]{\large Fuzzy latin squares and balanced\\ permutation pattern statistics}

\author{Joy Cooper and Peter J.~Dukes}
\address{
Mathematics and Statistics,
University of Victoria, Victoria, BC, Canada
}
\email{joycooper@uvic.ca; dukes@uvic.ca}

\date{\today}

\begin{abstract}
A latin square of order $n$ can be viewed as a partition of the $n \times n$ all-ones matrix into permutation matrix summands.  Here, we consider a relaxation in which the matrix summands are allowed to be induced from shorter permutations.  For $\sigma \in S_k$, the `fuzzy permutation matrix' $P_\sigma^{\uparrow n}$ arises from combining all $\binom{n}{k}^2$ order-preserving embeddings of the $k \times k$ permutation matrix $P_\sigma$ into an $n \times n$ matrix.  We define a fuzzy latin square as a linear combination of $n \times n$ fuzzy permutation matrices $P_\sigma^{\uparrow n}$ equaling a constant matrix.
We study various aspects of these objects, including certain relevant vector space dimensions and a census of fuzzy latin squares with a small number of terms.  In particular, we determine strong conditions on four-term fuzzy latin squares in the `vanishing' case (when the constant matrix is all zeros). We also report on a computer-assisted classification of six-term fuzzy latin squares in the non-vanishing case.
\end{abstract}

\thanks{Research of Peter Dukes is supported by NSERC Discovery Grant RGPIN-2024-03966.}

\maketitle
\hrule

\bigskip

\section{Introduction}
\label{sec:intro}

A \emph{latin square} of order $n$ is an $n \times n$ array with entries from a set of $n$ symbols (often taken to be 
$[n]:=\{1,2,\dots,n\}$) having
the property that each symbol appears exactly once in every row and every column.  

A latin square of order $n$ is equivalent to an ordered decomposition of the $n \times n$ all-ones matrix $J=J_{n \times n}$ into permutation matrix summands.  Since the ordering of terms in this decomposition is unimportant for our purposes, we focus on 
\emph{row-normalized} latin squares, in which the first row is in the natural order ${\tt 123\cdots n}$.  A \emph{normalized} (or \emph{reduced}) latin square has both its first row and first column in the natural order.

As usual, $S_n$ denotes the symmetric group of all permutations on $[n]$.  For $\sigma \in S_n$, the $n \times n$ permutation matrix $P_\sigma$ has entries
$$P_\sigma(i,j)=\begin{cases} 1 & \text{if } \sigma(i)=j, \\ 
0 & \text{otherwise}.
\end{cases}$$
Given a latin square $L$, the symbol `level set' permutations
$\{\sigma_1,\dots,\sigma_n\}$ satisfy
\begin{equation}
\label{eq:ls-cover}
P_{\sigma_1}+\dots+P_{\sigma_n} = J.
\end{equation}
By abuse of notation, we identify a row-normalized latin square with its set of symbol permutations.  In other words, we say $\sigma \in L$ if and only if $P_\sigma$ appears as a term in \eqref{eq:ls-cover}.  An alternate convention could identify $L$ with its set of columns; \eqref{eq:ls-cover} holds in either case.

We now consider a generalization of \eqref{eq:ls-cover} motivated by \cite{CDN2024} and prior work on quasirandom permutations.  
For $\sigma \in S_k$, $1 \le k \le n$, the $n \times n$ \emph{fuzzy permutation matrix} for $\sigma$ is defined by
\begin{equation}
\label{eq:fuzzy-defn}
P^{\uparrow n}_\sigma (x,y) = \frac{(n-k)!}{\binom{n}{k}}\sum_{i=1}^{k} \binom{x-1}{i-1} \binom{n-x}{k-i} \binom{y-1}{\sigma(i)-1} \binom{n-y}{k-\sigma(i)}.
\end{equation}
To motivate the definition, $P^{\uparrow n}_\sigma$ is, up to scalar multiple, the sum of all $n \times n$ partial permutation matrices having exactly $k$ ones in the same relative order as $\sigma$. Fixing an entry $(x,y)$, the product of binomial coefficients in \eqref{eq:fuzzy-defn} counts the ways to place the remaining $k-1$ ones in a partial permutation matrix while respecting the ordering of $\sigma$.

We note that when $k=n$, the sum in \eqref{eq:fuzzy-defn} has its $i$th term equaling $1$ if $(x,y)=(i,\sigma(i))$, and otherwise the term vanishes.  So $P^{\uparrow n}_\sigma=P_\sigma$ in this case.  The multiple in front of the sum equals $1$ for $k=n$ and in general it is a normalization factor calibrated so that the row and column sums of $P^{\uparrow n}_\sigma$ equal $(n-1)!/(k-1)!$. This exact constant is not important here, but it is used in \cite{CDN2024} and is a reasonable choice for the quasirandomness application.

\begin{ex}
Let $\sigma=\perm{213}$ and suppose $n=4$.  There are $\binom{4}{3}^2=16$ partial permutations matching the `medium-low-high' ordering of $\sigma$,
namely 
\perm{213-}, \perm{214-}, \perm{314-}, \perm{324-},
\perm{21-3}, \perm{21-4}, \perm{31-4}, \perm{32-4},
\perm{2-13}, \perm{2-14}, \perm{3-14}, \perm{3-24},
\perm{-213}, \perm{-214}, \perm{-314}, \perm{-324}.
The sum of corresponding partial permutation matrices is
$$\begin{bmatrix}
0 & 1 & 0 & 0\\
1 & 0 & 0 & 0\\
0 & 0 & 1 & 0\\
0 & 0 & 0 & 0\\
\end{bmatrix}
+
\begin{bmatrix}
0 & 1 & 0 & 0\\
1 & 0 & 0 & 0\\
0 & 0 & 0 & 1\\
0 & 0 & 0 & 0\\
\end{bmatrix}
+\dots+
\begin{bmatrix}
0 & 0 & 0 & 0\\
0 & 0 & 1 & 0\\
0 & 1 & 0 & 0\\
0 & 0 & 0 & 1\\
\end{bmatrix}
=
\begin{bmatrix}
0 & 6 & 6 & 0\\
6 & 4 & 2 & 0\\
6 & 2 & 1 & 3\\
0 & 0 & 3 & 9\\
\end{bmatrix}
=4 P_\sigma^{\uparrow 4}.$$
\end{ex}

Informally, $P_\sigma^{\uparrow n}$ `smears out' the permutation $\sigma \in S_k$ onto an $n \times n$ frame.  Figure~\ref{fig:smear-id} shows the effect of this on the identity permutation,
where the grayscale intensity indicates the relative magnitude of entries.

\begin{figure}[htbp]
\begin{center}
\begin{tikzpicture}
\node at (0,0) {\includegraphics[height=2.7cm]{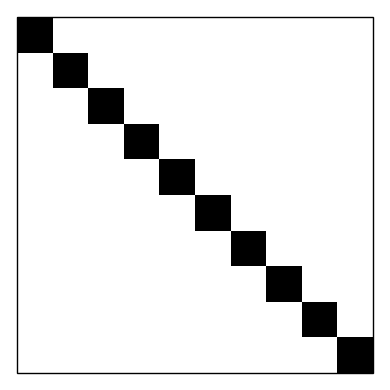}};
\node at (2,0) {\Large $\rightsquigarrow$};
\node at (4,0) {\includegraphics[height=2.7cm]{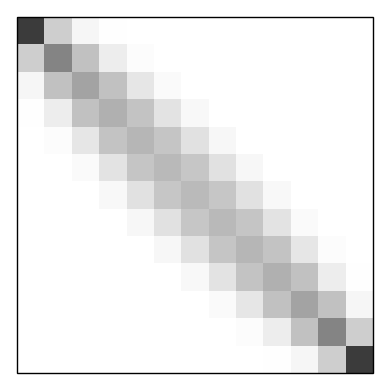}};
\end{tikzpicture}
\caption{The matrices $P_\sigma$ and $P_\sigma^{\uparrow n}$ where $\sigma=\perm{123...k}$.}
\label{fig:smear-id}
\end{center}
\end{figure}

In this paper, we study objects which generalize latin squares in the sense that the permutation matrices in \eqref{eq:ls-cover} are allowed to be fuzzy permutation matrices $P^{\uparrow n}_{\sigma_i}$ for various permutations $\sigma_i$ (of possibly mixed lengths).  In more detail, we let $S_{\le n}:=\cup_{k=1}^n S_k$ and define a 
\emph{fuzzy latin square} of order $n$ as a function $c:S_{\le n} \rightarrow \Q$ such that
\begin{equation}
\label{eq:fuzzy-ls}
\sum_{\sigma \in \mathcal{S}_{\le n}} c(\sigma) P_\sigma^{\uparrow n} = cJ,
\end{equation}
a constant matrix, for some $c \in \Q$.

\begin{ex}
The combination $\perm{3412}+\perm{2143}+\perm{1234}+\perm{4321}$ represents a latin square of order $4$.  If we scale by $3$ and replace the last two terms with $4(\perm{123})+3(\perm{21})$, then the corresponding linear combination of fuzzy permutation matrices equals $9J$. The constituent matrices for each square are shown in Figures~\ref{fig:latin} and \ref{fig:fuzzy}. In the latter case, the shading intensity captures relative entry size, but the overall magnitude of entries is larger.
\end{ex}
\begin{figure}[htbp]
\begin{center}
\begin{tikzpicture}[scale=0.5]
\definecolor{myred}{RGB}{220,60,60}
\definecolor{mygreen}{RGB}{60,180,75}
\definecolor{myblue}{RGB}{70,130,255}
\definecolor{myyellow}{RGB}{255,220,50}

\fill[myyellow] (0,1) rectangle++ (1,1);
\fill[myyellow] (1,0) rectangle++ (1,1);
\fill[myyellow] (2,3) rectangle++ (1,1);
\fill[myyellow] (3,2) rectangle++ (1,1);
\fill[mygreen] (5,2) rectangle++ (1,1);
\fill[mygreen] (6,3) rectangle++ (1,1);
\fill[mygreen] (7,0) rectangle++ (1,1);
\fill[mygreen] (8,1) rectangle++ (1,1);
\fill[myred] (13,0) rectangle++ (1,1);
\fill[myred] (12,1) rectangle++ (1,1);
\fill[myred] (11,2) rectangle++ (1,1);
\fill[myred] (10,3) rectangle++ (1,1);
\fill[myblue] (15,0) rectangle++ (1,1);
\fill[myblue] (16,1) rectangle++ (1,1);
\fill[myblue] (17,2) rectangle++ (1,1);
\fill[myblue] (18,3) rectangle++ (1,1);

\draw[thick] (0,0) grid (4,4);
\draw[thick] (5,0) grid (9,4);
\draw[thick] (10,0) grid (14,4);
\draw[thick] (15,0) grid (19,4);
\end{tikzpicture}
\caption{Matrices $P_\sigma$ in the latin square $\perm{3412}+\perm{2143}+\perm{1234}+\perm{4321}$.}
\label{fig:latin}
\end{center}
\end{figure}

\begin{figure}[htbp]
\begin{center}
\begin{tikzpicture}[scale=0.5]
\definecolor{myred}{RGB}{220,60,60}
\definecolor{mygreen}{RGB}{60,180,75}
\definecolor{myblue}{RGB}{70,130,255}
\definecolor{myyellow}{RGB}{255,220,50}

\fill[myyellow] (0,1) rectangle++ (1,1);
\fill[myyellow] (1,0) rectangle++ (1,1);
\fill[myyellow] (2,3) rectangle++ (1,1);
\fill[myyellow] (3,2) rectangle++ (1,1);
\node at (0.5,1.5) {$3$};
\node at (1.5,0.5) {$3$};
\node at (2.5,3.5) {$3$};
\node at (3.5,2.5) {$3$};

\fill[mygreen] (5,2) rectangle++ (1,1);
\fill[mygreen] (6,3) rectangle++ (1,1);
\fill[mygreen] (7,0) rectangle++ (1,1);
\fill[mygreen] (8,1) rectangle++ (1,1);
\node at (5.5,2.5) {$3$};
\node at (6.5,3.5) {$3$};
\node at (7.5,0.5) {$3$};
\node at (8.5,1.5) {$3$};

\fill[myred!75] (10,3) rectangle++ (1,1);
\fill[myred!25] (11,3) rectangle++ (1,1);
\fill[myred!25] (10,2) rectangle++ (1,1);
\fill[myred!42] (11,2) rectangle++ (1,1);
\fill[myred!33] (12,2) rectangle++ (1,1);
\fill[myred!33] (11,1) rectangle++ (1,1);
\fill[myred!42] (12,1) rectangle++ (1,1);
\fill[myred!25] (13,1) rectangle++ (1,1);
\fill[myred!25] (12,0) rectangle++ (1,1);
\fill[myred!75] (13,0) rectangle++ (1,1);
\node at (10.5,3.5) {$9$};
\node at (11.5,3.5) {$3$};
\node at (10.5,2.5) {$3$};
\node at (11.5,2.5) {$5$};
\node at (12.5,2.5) {$4$};
\node at (11.5,1.5) {$4$};
\node at (12.5,1.5) {$5$};
\node at (13.5,1.5) {$3$};
\node at (12.5,0.5) {$3$};
\node at (13.5,0.5) {$9$};

\fill[myblue!17] (16,3) rectangle++ (1,1);
\fill[myblue!33] (17,3) rectangle++ (1,1);
\fill[myblue!50] (18,3) rectangle++ (1,1);
\fill[myblue!17] (15,2) rectangle++ (1,1);
\fill[myblue!22] (16,2) rectangle++ (1,1);
\fill[myblue!28] (17,2) rectangle++ (1,1);
\fill[myblue!33] (18,2) rectangle++ (1,1);
\fill[myblue!33] (15,1) rectangle++ (1,1);
\fill[myblue!28] (16,1) rectangle++ (1,1);
\fill[myblue!22] (17,1) rectangle++ (1,1);
\fill[myblue!17] (18,1) rectangle++ (1,1);
\fill[myblue!50] (15,0) rectangle++ (1,1);
\fill[myblue!33] (16,0) rectangle++ (1,1);
\fill[myblue!17] (17,0) rectangle++ (1,1);
\node at (16.5,3.5) {$3$};
\node at (17.5,3.5) {$6$};
\node at (18.5,3.5) {$9$};
\node at (15.5,2.5) {$3$};
\node at (16.5,2.5) {$4$};
\node at (17.5,2.5) {$5$};
\node at (18.5,2.5) {$6$};
\node at (15.5,1.5) {$6$};
\node at (16.5,1.5) {$5$};
\node at (17.5,1.5) {$4$};
\node at (18.5,1.5) {$3$};
\node at (15.5,0.5) {$9$};
\node at (16.5,0.5) {$6$};
\node at (17.5,0.5) {$3$};

\draw[thick] (0,0) grid (4,4);
\draw[thick] (5,0) grid (9,4);
\draw[thick] (10,0) grid (14,4);
\draw[thick] (15,0) grid (19,4);
\end{tikzpicture}
\caption{
Matrix terms in the fuzzy latin square $3(\perm{3412})+3(\perm{2143})+4(\perm{123})+3(\perm{21})$.}
\label{fig:fuzzy}
\end{center}
\end{figure}


We emphasize that fuzzy latin squares differ from ordinary latin squares in a few ways.  First, we allow permutations of different lengths; this motivates our adjective `fuzzy'.  Additionally, we allow coefficients in $\Q$, rather than $\{0,1\}$, and the constant $c$ can be arbitrary, instead of $1$.  This fractional relaxation is convenient for facilitating linear algebraic methods.  Finally, latin squares come with an ordering of terms in \eqref{eq:ls-cover}, and we ignore this in our treatment of fuzzy latin squares.  Apart from the key extension to mixed-length permutations, it is possible in principle to impose $c=1$, positive coefficients, and ordered terms in \eqref{eq:fuzzy-ls} to more closely align with the definition of ordinary latin squares. These adjustments would not have significant bearing on our work here.

As mentioned earlier, the definition of $P_\sigma^{\uparrow n}$ was given in \cite{CDN2024} and inspired by permutation pattern statistics.  Suppose $\sigma \in S_k$ and $\pi \in S_n$, where $n \ge k$. A `copy' of $\sigma$ in $\pi$ is a sequence of $k$ distinct indices $1 \le x_1 < x_2 < \cdots < x_k \le n$ on which $\pi(x_i)<\pi(x_j)$ if and only if $\sigma(i)<\sigma(j)$.
The \emph{density} of $\sigma$ in $\pi$, denoted $d(\sigma,\pi)$, is the number of copies of $\sigma$ in $\pi$ divided by $\binom{n}{k}$. We say that a sequence of permutations $\pi_1,\pi_2,\dots$ is \emph{quasirandom} if $\displaystyle \lim_{n \rightarrow \infty} d(\sigma,\pi_n)=1/k!$ for every $\sigma \in S_k$ and every positive integer $k$.

Now consider a linear combination $\alpha=c_1 \sigma_1+\dots+c_r \sigma_r$ of permutations, where $\sigma_i \in S_{k_i}$ for each $i=1,\dots,r$.  If we extend density by linearity, then a quasirandom sequence 
$\{\pi_n\}$ satisfies 
\begin{equation}
\label{eq:quasirandom}
\lim_{n \rightarrow \infty}
d(\alpha,\pi_n)=\sum_{i=1}^r \frac{c_i}{k_i!}.
\end{equation}
In general, $\alpha$ acts as a statistic on permutation patterns, and is called \emph{quasirandom-forcing} if whenever \eqref{eq:quasirandom} holds it is the case that $\{\pi_n\}$ is quasirandom.  It was shown in \cite{CDN2024} (and implicit in prior work on the topic) that $\alpha$ being quasirandom-forcing implies that it is a fuzzy latin square; that is, for some $c$,
\begin{equation}
\label{eq:fuzzy2}    
\sum_{i=1}^r c_i P_{\sigma_i}^{\uparrow n} = cJ.
\end{equation}

In Section~\ref{sec:dimensions}, we study the dimension of vector spaces naturally associated with (fuzzy) latin squares.  This topic is closely connected with deep conjectures, both on irreducible characters of the symmetric group and on the distribution of derangements in latin squares. In Section~\ref{sec:vanishing}, we focus on `vanishing squares', in which the right side of \eqref{eq:fuzzy-ls} is the zero matrix.  We give a near-classification of vanishing squares with four nonzero terms.  In Section~\ref{sec:six-terms}, we classify the solutions to \eqref{eq:fuzzy-ls} with six nonzero terms on the left side, extending prior work in \cite{CDN2024} which had considered five or fewer terms. In our conclusion, we identify some possible next steps for the research.

\section{Dimensions}
\label{sec:dimensions}


\subsection{Set-up}
We work in $\Q[S_n]$ and $\Q[S_{\le n}]$,
the free $\Q$-vector spaces on permutations of length (at most) $n$.  In this section, we study subspaces of these related to our objects of interest.

Define the map $\phi:\Q[S_{\le n}] \rightarrow \Q^{n \times n}$ by
$$\phi \left( \sum_{\sigma \in S_{\le n}} c_\sigma \sigma \right) =
\sum_{\sigma \in S_{\le n}} c_\sigma P_\sigma^{\uparrow n}.$$
That is, $\phi$ replaces permutations 
by their corresponding 
fuzzy permutation matrices, with extension by linearity.  From this, it is easy to see that $\phi$ is a vector space homomorphism.

Let $\mathcal{H}_n \subseteq \Q^{n \times n}$ denote the set of $n \times n$ matrices with constant row and column sums.  It is clear that $\mathcal{H}_n$ is a subspace of $\Q^{n \times n}$. The famous Birkhoff-von Neumann Theorem \cite{Bir1946} states that the set of $n \times n$ doubly-stochastic matrices is the convex hull of the set of $n \times n$ permutation matrices.  Since all $P^{\uparrow n}_\sigma$ have constant line sums, this implies that the image of $\phi$ is precisely $\mathcal{H}_n$.

We have $\mathcal{H}_n = \langle J \rangle \oplus \mathcal{K}_n$, where
$J=J_{n\times n}$ is the all-ones matrix and the latter summand denotes the space of matrices with zero line sums.  In other words, $\mathcal{K}_n$ is the kernel of the map $A \mapsto (AJ,JA)$.  It is easy to see that the image of this map has dimension $2n-1$, so 
$$\dim \mathcal{H}_n = 1+\dim \mathcal{K}_n = 1+n^2-(2n-1) = (n-1)^2+1.$$ 
As an explicit basis for $\mathcal{H}_n$, we can take $J$ together with the $(n-1)^2$ matrices $A^{hk}$, $1 \le h,k < n$, in which 
$$A^{hk}(i,j)=
\begin{cases}
\phantom{-}1 & \text{if } (i,j)=(h,k) \text{ or } (n,n),\\
-1 & \text{if } (i,j)=(h,n) \text{ or } (n,k),\\
\phantom{-}0 & \text{otherwise}.
\end{cases}$$

Let $\mathcal{W}_{\le n}:=\ker \phi$, the set of formal $\Q$-linear combinations of permutations of length $\le n$ whose associated matrix vanishes. Then, 
$\dim \Q[S_{\le n}] = \dim \ker \phi + \dim \mathrm{im}\, \phi
= \dim \mathcal{W}_{\le n} + \dim \mathcal{H}_n$.
It follows that 
\begin{equation}
    \label{eq:dim-Wn}
\dim \mathcal{W}_{\le n} = \sum_{k=1}^n k!-(n-1)^2-1.
\end{equation}

In terms of $\phi$, a fuzzy latin square of order $n$ is an element $f \in \Q[S_{\le n}]$ such that $\phi(f) = cJ$ for some $c \in \Q$. 
Let $\mathcal{F}_{\le n} \subseteq \Q[S_{\le n}]$ denote the space of all fuzzy latin squares with maximum permutation length $n$. Then $\mathcal{F}_{\le n} = \langle \perm{1} \rangle \oplus \mathcal{W}_{\le n}$.  From \eqref{eq:dim-Wn}, we have 
$$\dim \mathcal{F}_{\le n} = \sum_{k=1}^n k!-(n-1)^2.$$
If we define $\mathcal{F}_n = \mathcal{F}_{\le n} \cap \Q[S_n]$, then through similar reasoning as above we get
\begin{equation}
\label{eq:dim-Fn}
\dim \mathcal{F}_{n} = n!-(n-1)^2.
\end{equation}

Recall that each $n \times n$ row-normalized latin square $L$ can be identified with an element of $\Q[S_n]$, namely $\sum_{\sigma \in L} \sigma$.   Let $\mathcal{L}_n$ denote the subspace of $\mathcal{F}_n$ spanned by all $n \times n$ latin squares.  It is natural to ask how tight the containments $\mathcal{L}_n \subseteq \mathcal{F}_n \subseteq \mathcal{F}_{\le n}$ are.  From \eqref{eq:dim-Fn}, we have the bound
\begin{equation}
\label{eq:dim-Ln}
\dim \mathcal{L}_n \le n!-(n-1)^2.
\end{equation}

Our main result of this section is that the upper bound is achieved for sufficiently large $n$.  

\begin{theorem}
\label{thm:dim-Ln}
We have $\dim \mathcal{L}_n = n!-(n-1)^2$ and hence $\mathcal{L}_n=\mathcal{F}_n$ for sufficiently large $n$.
\end{theorem}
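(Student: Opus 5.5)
Since $\mathcal{L}_n \subseteq \mathcal{F}_n$ and \eqref{eq:dim-Ln} already gives the upper bound, it suffices to show $\dim \mathcal{L}_n \ge n!-(n-1)^2$. We argue by duality. A linear functional $w:\Q[S_n]\to\Q$, written $\sigma\mapsto w(\sigma)$, vanishes on $\mathcal{L}_n$ exactly when $\sum_{\sigma\in L} w(\sigma)=0$ for every row-normalized latin square $L$. We need to show that the space of such $w$ has dimension at most $(n-1)^2$.

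The $(n-1)^2$ ``obvious'' annihilators come from matrices. For any matrix $M\in\Q^{n\times n}$, set $w_M(\sigma)=\langle M,P_\sigma\rangle=\sum_i M(i,\sigma(i))$. Then $\sum_{\sigma\in L}w_M(\sigma)=\langle M,J\rangle$. So $w_M$ annihilates $\mathcal{L}_n$ exactly when the entries of $M$ sum to $0$. Modulo matrices of the form $(a_i+b_j)$, which give constant functionals, these span a space of dimension $(n-1)^2$. Row-normalization imposes no extra constraint, because any latin square has a row-normalized representative after relabelling symbols, and the set of permutations is unchanged. The goal is therefore to prove that every annihilator $w$ is of the form $w_M$.

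Step 1 is to use ``trades'' between latin squares. If $L$ and $L'$ share $n-2$ permutations, then the sum of $w$ over the two remaining permutations is the same for both. The basic example is an intercalate swap: the symbols $a,b$ occupy a $2\times 2$ subsquare, and exchanging them changes $\sigma_a,\sigma_b$ to $\sigma_a',\sigma_b'$. This gives $w(\sigma_a)+w(\sigma_b)=w(\sigma_a')+w(\sigma_b')$. More generally, every cycle switch on a pair of symbols gives such a relation. For a fixed pair of disjoint permutations $\sigma,\tau$ (that is, $\sigma^{-1}\tau$ a derangement), the union $P_\sigma+P_\tau$ decomposes into cycles. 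Any ``re-pairing'' $\sigma',\tau'$ of those cycles yields $w(\sigma)+w(\tau)=w(\sigma')+w(\tau')$, provided $\{\sigma,\tau\}$ extends to a latin square. For large $n$ we would guarantee this extension using a known completion result: any two disjoint permutations, or any partial latin square with few filled rows or symbols, can be completed. Ryser or Hall type arguments on latin rectangles suffice, since the $2\times n$ array given by symbols $a,b$ in rows extends via the symbol-level viewpoint.

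Step 2 is to show that these exchange relations force $w$ to be ``affine in $P_\sigma$''. We would consider $u(\sigma)=w(\sigma)-w_M(\sigma)$, with $M$ chosen to fit $w$ on a spanning set of transposition-type differences. From the relations we aim to show that $u$ is constant, and then that the constant is $0$. The latter follows by evaluating $u$ on one latin square, since $\sum_{\sigma\in L}u(\sigma)=n\cdot\text{const}=0$. This is the main obstacle. It amounts to showing that the relations $e_\sigma+e_\tau-e_{\sigma'}-e_{\tau'}$, taken over disjoint pairs with re-pairable cycles, span the whole kernel of $\sigma\mapsto P_\sigma$ inside the augmentation-free part. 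Equivalently, the $\Q[S_n]$-module they generate must contain all isotypic components except the trivial and standard pieces $S^{(n)}$ and $S^{(n-1,1)}$. My first attempt would be representation-theoretic. The relation space is $S_n\times S_n$-invariant, since latin squares are permuted by row and column permutations. So it suffices to show that, for each irreducible $S^\lambda$ with $\lambda\ne(n),(n-1,1)$, the relations have nonzero projection. This reduces to exhibiting one relation on which the character $\chi^\lambda$ evaluated along $\sigma^{-1}\sigma',\ldots$ does not vanish. The relevant condition is that $\chi^\lambda$ does not annihilate the double-transposition-type element arising from a swap. This is where ``sufficiently large $n$'' enters, and where known character estimates (or a nonvanishing conjecture, if unavailable) would be needed. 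I would try first to use swaps involving a single $2$-cycle pair (intercalates), which produce elements like $(1-s)(1-t)$ for commuting transpositions. I would then check nonvanishing via the Murnaghan--Nakayama rule for all $\lambda$ other than $(n)$ and $(n-1,1)$, handling $(1^n)$ and its neighbours separately using longer cycle switches.
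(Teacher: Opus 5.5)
Your reduction is sound and differs genuinely from the paper's. The paper bounds every eigenvalue $\theta_\mu$ of $MM^\top$ (Larsen--Shalev, Proposition~\ref{prop:parities}, Lemma~\ref{lem:merge}). You instead note that the span $\mathcal{R}$ of two-row trade relations is $S_n\times S_n$-invariant, i.e.\ a two-sided ideal of $\Q[S_n]$, contained in $\mathcal{L}_n\cap\ker\phi$. So it suffices to find, for each $\lambda\neq(n),(n-1,1)$, one relation $r$ with $\rho_\lambda(r)\neq0$. Completing a disjoint pair needs no large-$n$ input: $J-P_\sigma-P_\tau$ is $(n-2)$-regular bipartite, so it splits into permutation matrices by K\"onig's theorem.

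\textbf{The gap.} The proposal stops exactly at the decisive step, deferring it to unspecified character estimates or a conjecture. Moreover, the element you intend to test is not a relation. Suppose an intercalate of the symbols with permutations $\sigma,\tau$ sits in rows $i,j$. Then $\sigma^{-1}\tau=sb$ with $s=(i\,j)$ and $b$ a \emph{derangement of the remaining $n-2$ points}, and the switch yields $\sigma+\tau-\sigma s-\sigma b=\sigma(1-s)(1-b)$. A general re-pairing yields $\sigma(1-a)(1-b)$ with $a,b$ derangements of complementary sets. The element $\sigma(1-s)(1-t)$, with $t$ a second transposition, is a trade only for $n=4$, because $\sigma$ and $\sigma st$ agree on $n-4$ points. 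So as written, nothing beyond the upper bound \eqref{eq:dim-Ln} is proved.

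\textbf{How to close it.} Restrict $S^\lambda$ to $S_{\{i,j\}}\times S_{[n]\setminus\{i,j\}}$. By the Pieri rule, $(1-s)(1-b)$ kills the summands $S^{(2)}\otimes S^\beta$. It acts as $2(1-\rho_\beta(b))$ on the summands $S^{(1,1)}\otimes S^\beta$, where $\lambda/\beta$ is a vertical strip of size $2$. Its character is therefore a sum of nonnegative terms, so a character test is exact here.

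For $\lambda\neq(n),(n-1,1)$ one can choose such a $\beta\neq(n-2)$: remove two corners, or the last boxes of the bottom two rows if $\lambda$ is a rectangle, and take $\beta=(n-3,1)$ when $\lambda=(n-2,1,1)$. For $n\ge6$ the kernel of $\rho_\beta$ is then $1$, $A_{n-2}$, or $V_4$ (the last only for $\beta=(2,2)$). Hence an odd derangement $b$ acts nontrivially.

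So every component with $\lambda\neq(n),(n-1,1)$ lies in $\mathcal{R}\subseteq\mathcal{L}_n$ for all $n\ge6$, and adding one latin square supplies the trivial component. For $n=5$ the sign component is reached by no trade relation, since every re-pairing there involves a derangement of three points, which is even; this is harmless for large $n$.

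\textbf{Comparison.} Completed this way, your route is elementary and effective in $n$. Since $\theta_\mu>0$ exactly when the $\mu$-component lies in the column space of $M$, it gives $\theta_{1^n}>0$ and $\theta_{2^21^{n-4}}>0$ without Proposition~\ref{prop:parities} or Lemma~\ref{lem:merge}. What the paper's spectral route buys in return is quantitative information on the $\theta_\mu$, such as the asymptotics of $\theta_{1^n}$, which a qualitative nonvanishing argument does not provide.
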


The proof of Theorem~\ref{thm:dim-Ln} is given later, in Section~\ref{sec:proof}, after we introduce some background.
Getting a lower bound on $\dim \mathcal{L}_n$ only slightly less than $n!-(n-1)^2$ is relatively straightforward, but closing the gap requires some tools from \cite{CGW2008,KPS2025} on two-row distributions in a random latin square.

\subsection{Latin square completions}

Let $\Lambda_n$ denote the set of all row-normalized latin squares of order $n$ (on symbols $[n]$), and put $\ell_n:=|\Lambda_n|$. The total number of latin squares of order $n$ equals $n! \ell_n$. For permutations $\sigma,\tau \in S_n$, let $$\Lambda_n(\sigma,\tau) =\{L \in \Lambda_n: \sigma,\tau \in L\},$$
and abbreviate $\Lambda_n(\sigma,\sigma)$ by $\Lambda_n(\sigma)$.  We denote the cardinalities by $\ell_n(\sigma):=|\Lambda_n(\sigma)|$ and $\ell_n(\sigma,\tau):=|\Lambda_n(\sigma,\tau)|$.
Since every row-normalized latin square arises from one in $\Lambda_n(\sigma)$ via a unique permutation of rows $2,\dots,n$, it is clear that 
$\ell_n=(n-1)!\ell_n(\sigma)$, independently of $\sigma$.

For $\sigma \neq \tau$, transposing and permuting symbols gives that $\ell_n(\sigma,\tau)$ also counts the latin squares with first row $\sigma$ and second row $\tau$.  
Moreover, under a permutation of symbols and columns, $\ell_n(\sigma,\tau)=\ell_n(\mathrm{id},\delta)$, where $\delta=\sigma^{-1}\tau$, and this depends only on the cycle structure of $\delta$. The count is, of course, zero unless $\delta$ is a derangement.

The quantities $\ell_n(\sigma,\tau)$ are not well understood in general, but
enumerative results for small $n$ appear in \cite{CGW2008}.  The authors of that paper asked whether $\ell_n(\mathrm{id},\delta)$ is, for large $n$, equidistributed across derangements $\delta$.
Peter Cameron's blog also mentions this open problem.

For a set of permutations $\Gamma \subseteq S_n$, let
$$\kappa(\Gamma):=\sum_{\gamma \in \Gamma \setminus \{\mathrm{id}\} } \ell_n(\mathrm{id},\gamma).$$  
For our proof of Theorem~\ref{thm:dim-Ln} to follow, we use asymptotic results on $\kappa$ in some special cases.  One of these concerns $\Gamma=A_n$, the alternating group.  Kwan, Petrova and Sawhney show in  \cite{KPS2025} that two rows in a random latin square of large order $n$ have equal parity with probability tending to $1/2$.  In our language, this says 
$\lim_{n \rightarrow \infty} \kappa(A_n)/\kappa(S_n)=1/2$. With a bit of care, the rate of approach can be estimated using  machinery in that reference.  The following will be proved in Appendix A.

\begin{prop}[from tools in \cite{KPS2025}]
\label{prop:parities}
$$\frac{\kappa(A_n)}{\kappa(S_n)} = \frac{1}{2}+o(1/n).$$
\end{prop}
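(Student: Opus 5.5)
First I would rewrite the ratio as a parity bias. Let $\sigma_1=\mathrm{id},\sigma_2,\dots,\sigma_n$ be the rows of a uniformly random latin square of order $n$, all of whose rows are distinct. By the symmetry discussed above, $\kappa(\Gamma)/\kappa(S_n)$ is the probability that $\sigma_1^{-1}\sigma_2\in\Gamma$; it is the same for any fixed pair of distinct rows. Hence
$$\frac{\kappa(A_n)}{\kappa(S_n)}-\frac12=\frac12\,\mathbb{E}\bigl[\mathrm{sgn}(\sigma_1^{-1}\sigma_2)\bigr]=\frac12\,\mathbb{E}\bigl[\mathrm{sgn}(\sigma_1)\mathrm{sgn}(\sigma_2)\bigr].$$
Let $N_+$ and $N_-$ be the numbers of even and odd rows, so $N_++N_-=n$. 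Averaging over all $\binom n2$ ordered pairs of distinct rows, which is legitimate by exchangeability of rows, gives
$$\mathbb{E}\bigl[\mathrm{sgn}(\sigma_1)\mathrm{sgn}(\sigma_2)\bigr]=\frac{\mathbb{E}[(N_+-N_-)^2]-n}{n(n-1)}.$$
So the claim is equivalent to $\mathbb{E}[(N_+-N_-)^2]=n+o(n)$. In words, the row signs must behave like nearly independent fair coins to second order.

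The next step is to extract this second-moment statement from \cite{KPS2025}. Their method conditions on the first $n-m$ rows, say, and shows that the remaining rows have nearly uniform signs with a quantitative error. The cleanest route I see is an \emph{exposure martingale}. Reveal the rows one at a time, in random order, and track the conditional expectation of $\mathrm{sgn}(\sigma_j)$ for each still-hidden row $j$. The result in \cite{KPS2025} (parity equidistribution for a single row, given a partial latin rectangle in a ``typical'' state) should give
$$\Bigl|\mathbb{E}[\mathrm{sgn}(\sigma_j)\mid \sigma_i]\Bigr|=o(1/n)$$
for distinct rows $i\ne j$, uniformly outside an event of probability $o(1/n)$. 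Multiplying by $\mathrm{sgn}(\sigma_i)$ and averaging then gives $\mathbb{E}[\mathrm{sgn}(\sigma_i)\mathrm{sgn}(\sigma_j)]=o(1/n)$ directly. The exceptional event contributes at most its probability, since the summand is bounded by $1$.

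The main obstacle is the rate. \cite{KPS2025} only states that the limit equals $1/2$, so the error terms have to be tracked through their argument. These error terms come from the switching and absorption steps, and from the approximation of the latin square by random Latin rectangles. I would first check whether their bound on the bias of a single row's sign, conditional on one other row, is in fact polynomially small, say $n^{-c}$ with $c>1$. This seems plausible, because the randomness in a long random permutation's cycle structure makes parity bias decay extremely fast. If only a weaker rate is available, I would fall back on conditioning on many rows at once. I would then use the random-permutation-like behaviour of $\sigma_1^{-1}\sigma_2$, whose cycle count is concentrated, to get a bias of order $e^{-\Omega(n^{c})}$ from a local switching that swaps parity, namely a transposition realised by an intercalate-type cycle switch. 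The remaining routine task is to check that these switchings are nearly measure-preserving up to a factor $1+o(1/n)$.
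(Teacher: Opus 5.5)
Your first paragraph is correct. The ratio is the probability that two given rows have equal parity, so the claim is equivalent to $\mathbb{E}[\mathrm{sgn}(\sigma_1)\mathrm{sgn}(\sigma_2)]=o(1/n)$, or to $\mathbb{E}[(N_+-N_-)^2]=n+o(n)$. (There are $n(n-1)$ ordered pairs, not $\binom n2$, but your formula already uses the right count.) The gap is in everything after that. Relabel symbols by $\sigma_i^{-1}$. This shows that for \emph{every} value of $\sigma_i$,
\[
\mathbb{E}[\mathrm{sgn}(\sigma_j)\mid\sigma_i]=\mathrm{sgn}(\sigma_i)\Bigl(\frac{2\kappa(A_n)}{\kappa(S_n)}-1\Bigr).
\]
So $|\mathbb{E}[\mathrm{sgn}(\sigma_j)\mid\sigma_i]|$ is a deterministic number, exactly twice the quantity you are trying to bound. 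There is no exceptional event, and the exposure martingale does no work. Your key estimate is the proposition restated, justified only by ``should give'' from a reference that, as you note, states no rate. The cycle-structure heuristic does not help either: concentration of the number of cycles of $\sigma_1^{-1}\sigma_2$ says nothing about the parity of that number.

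Your fallback is where the real difficulty lies, and it is not a routine check. Let $\mathcal{E},\mathcal{O}$ be the squares whose rows $1,2$ have equal, respectively unequal, parity. Switching an intercalate that meets row $1$ but not row $2$ flips the relative parity, and double counting then only gives that $|\mathcal{E}|/|\mathcal{O}|$ equals the ratio of the average numbers of such intercalates in the two classes. To reach $1+o(1/n)$ you would need these averages to agree to relative precision $o(1/n)$. But the number of such intercalates is of order $n$, with fluctuations one expects to be of order $\sqrt n$, and each switch also creates and destroys other intercalates.

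The missing idea, which the paper takes from \cite{KPS2025}, is to re-randomize \emph{exactly} rather than approximately.
\begin{itemize}
\item For a fixed template $T$, the partial square $T\cap L$ is an $(l,1/10)$-intercalate expander with $l=\log^{11}n$, except with probability $\exp(-\omega(n\log^2 n))$.
\item The set of stable intercalates is unchanged (up to the swapped symbols) when a stable intercalate is switched. So switching each stable intercalate independently with probability $1/2$ sends a uniform latin square to a uniform latin square with no loss at all.
\item The $\mathbb{F}_2$-kernel lemma supplies a set $R$ of at least $n-l$ rows such that, after this re-randomization, the relative parity of two rows is an exactly fair coin whenever at least one of them lies in $R$.
\end{itemize}
The only error terms are the expander failure probability and the probability, at most $(l/n)^2$, that both rows avoid $R$. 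Both are $o(1/n)$, so no rate ever has to be extracted from an approximate switching estimate.
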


A partition of $n$ is a list of positive integers $\lambda=(\lambda_1,\dots,\lambda_k)$ with $\lambda_1 \ge \cdots \ge \lambda_k$ and $n=\lambda_1+\cdots +\lambda_k$.  We write $\lambda \vdash n$.  It is convenient to use `exponential' notation to abbreviate partitions; for instance, $1^n$ denotes the partition $(1,\dots,1)$.  The (sorted) list of cycle lengths of a permutation $\sigma \in S_n$ is a partition $\lambda \vdash n$.  We denote the corresponding conjugacy class of $S_n$ by $C_\lambda$. 

A \emph{merge} applied to $\lambda \vdash n$ results in a new partition of $n$ in which some pair of terms $\lambda_i,\lambda_j$ get replaced by their sum $\lambda_i+\lambda_j$ (as a single term).  A \emph{split} is the reverse operation.  The \emph{merge distance} between two partitions $\lambda,\lambda'$ is the minimum number of merge and split operations required to convert $\lambda$ to $\lambda'$.  Cavenagh, Greenhill and Wanless \cite{CGW2008} use split and merge operations to estimate
counts of latin squares with prescribed cycle types.  We later make use of one of their results.

\begin{lemma}[\cite{CGW2008}]
\label{lem:merge}
Suppose $\lambda,\lambda'\vdash n$ have merge distance $d$. Then
$$2^{-d} \le \frac{\kappa(C_\lambda)/|C_\lambda|}{\kappa(C_{\lambda'})/|C_{\lambda'}|} \le 2^d.$$
\end{lemma}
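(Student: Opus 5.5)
The plan is to reduce to the case $d=1$ and then compare two fixed permutations by a double-counting argument based on a local modification of latin squares.

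\textbf{Reduction to one step.} Since $\ell_n(\mathrm{id},\gamma)$ depends only on the cycle type of $\gamma$, we have $\kappa(C_\lambda)/|C_\lambda| = \ell_n(\mathrm{id},\gamma)$ for any $\gamma\in C_\lambda$ (assuming $\lambda\neq 1^n$). Write $N(\lambda)$ for this common value. The quantity $\log_2 N(\lambda)$ changes by at most $1$ in absolute value under each merge or split. So it suffices to prove $N(\lambda)\le 2N(\lambda')$ and $N(\lambda')\le 2N(\lambda)$ when $\lambda'$ is obtained from $\lambda$ by merging two parts $\lambda_i,\lambda_j$. Applying this along a shortest chain of $d$ merges and splits then gives the stated $2^{\pm d}$ bounds by multiplying.

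\textbf{One-step comparison.} By transposing and relabelling symbols as in the discussion before the lemma, $N(\lambda)$ counts latin squares whose first row is the identity and whose second row is a fixed $\gamma\in C_\lambda$. For $\gamma'=\gamma\,(a\,b)$ with $a,b$ in different cycles of $\gamma$, the cycle type of $\gamma'$ is $\lambda'$. Conversely, multiplying $\gamma'$ by the same transposition splits the merged cycle back. I would look for a local modification of a latin square $L$ that changes the second row from $\gamma$ to a permutation in $C_{\lambda'}$ while keeping a valid latin square, and which changes only the second row and one other row $r$. The natural candidate is a row cycle switch between row $2$ and a row $r\ge 3$. Exchanging the entries of rows $2$ and $r$ on the columns of one row cycle keeps the square latin. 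It changes the permutation between rows $1$ and $2$ by composing $\gamma$ with a permutation supported on that row cycle.

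\textbf{Main obstacle.} The difficulty is that a row cycle switch generally multiplies $\gamma$ by a longer cycle rather than a transposition. So I would not track a single switch. Instead I would double count pairs $(L,\text{switch})$ over all choices of $r$ and row cycle, and use the conjugation-invariance (permuting columns and renormalising symbols conjugates $\gamma$) to average over the class. The goal is to show that the switches taking class $\lambda$ to class $\lambda'$, and those going back, occur in numbers differing by at most a factor $2$. The factor $2$ should come from an unordered-versus-ordered choice: a merge of the cycles containing $a$ and $b$ can be undone by splitting at two essentially symmetric positions of the merged cycle. I expect getting this exact multiplicity bound, rather than a weaker constant, to be the delicate point. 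If the row-cycle approach resists, my fallback is the original argument of \cite{CGW2008}, which I would follow for this step.
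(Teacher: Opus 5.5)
The reduction to a single merge is fine, provided the chain stays among partitions with no part equal to $1$ (otherwise $\kappa(C_\lambda)=0$). But the one-step inequality is the whole content of the lemma, and your proposal does not prove it. The paper only imports it from \cite{CGW2008}, and your fallback does the same.

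\textbf{Why the proposed mechanism fails.} Switching rows $2$ and $r$ along a row cycle of length $m$ replaces $\gamma$ by $\gamma$ times an $m$-cycle. That is $m-1$ transpositions at once, so these switches do not move from $C_\lambda$ to the single merge $C_{\lambda'}$ in any controlled way. Only $m=2$ (an intercalate) gives a transposition. But there may be no intercalate at all through row $2$ in the columns you need, so double counting (square, switch) pairs gives no uniform constant.

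\textbf{Why the factor $2$ cannot come from an ordered versus unordered count.} The map $(\gamma,t)\mapsto(\gamma t,t)$ is a bijection between two sets of pairs:
\begin{itemize}
\item pairs with $\gamma\in C_\lambda$ and $t$ a transposition merging a $p$-cycle and a $q$-cycle of $\gamma$;
\item pairs with $\gamma t\in C_{\lambda'}$ and $t$ splitting a $(p+q)$-cycle of $\gamma t$ into a $p$-cycle and a $q$-cycle.
\end{itemize}
So every symmetric-group multiplicity is exactly absorbed by dividing by $|C_\lambda|$ and $|C_{\lambda'}|$. The constant must come from latin-square structure, and it is sharp: for $n=4$, $\ell_4(\mathrm{id},(12)(34))=2$ while $\ell_4(\mathrm{id},(1\,2\,3\,4))=1$.

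\textbf{What is missing.} You need a local move that fixes row $1$ and changes row $2$ by exactly one transposition. You are switching in the wrong direction. Fix two columns $a,b$. The rows split into column cycles: minimal sets of rows on which columns $a$ and $b$ carry the same symbols. Swap the entries of columns $a$ and $b$ along the column cycle through row $2$. This keeps the square latin and turns $\gamma$ into $\gamma(a\,b)$, which is exactly one merge if $a,b$ lie in different cycles of $\gamma$. It leaves row $1$ untouched provided row $1$ lies on a different column cycle. An intercalate is just the length-$2$ case of this move.

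The move is an involution preserving the column-cycle partition. Hence it is a bijection between two sets: the completions of $(\mathrm{id},\gamma)$ in which rows $1$ and $2$ lie on different column cycles of $\{a,b\}$, and the corresponding completions of $(\mathrm{id},\gamma(a\,b))$. What a proof must then control is the squares in which rows $1$ and $2$ share a column cycle. There the move does not change the permutation between rows $1$ and $2$ at all. In the $n=4$ example, for every merging pair $\{a,b\}$ exactly half of the completions of $(\mathrm{id},(12)(34))$ are of this bad kind, and no completion of $(\mathrm{id},\gamma(a\,b))$ is; that accounts exactly for the ratio $2$. Your proposal contains no argument controlling these squares, so the one-step bound, and with it the lemma, remains unproved.
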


Let $M=M_n$ denote the $n! \times \ell_n$ incidence matrix of $S_n$ versus $\Lambda_n$; that is, $M$ has rows indexed by permutations, columns indexed by latin squares, and entries
$$M(\sigma,L)=\begin{cases} 1 & \text{if } \sigma \in L, \\ 
0 & \text{otherwise}.
\end{cases}$$
The matrix $MM^\top$ has rows and columns
indexed by $S_n$, and the entries are given by
$MM^\top(\sigma,\tau) = \ell_n(\sigma,\tau)$.
Our proof of Theorem~\ref{thm:dim-Ln} is accomplished by showing that most eigenvalues of $MM^\top$ are strictly positive.
For this, we take a brief detour surveying some tools from representation theory of the symmetric group.

\subsection{Characters of $S_n$}

A standard reference for the background material here is Sagan's book \cite{Sagan2001}.  Recall from the previous section that conjugacy classes of $S_n$ are indexed by integer partitions $\lambda \vdash n$.
The irreducible characters of $S_n$ are also indexed by integer partitions $\mu \vdash n$, and we denote the corresponding character by $\chi_\mu$.  If $\sigma \in C_\lambda$, we abuse notation and write $\chi_\mu(\lambda)$ in place of $\chi_\mu(\sigma)$, since characters are constant on conjugacy classes.

For $\lambda \vdash n$, let $G_\lambda$ be the $n! \times n!$ matrix, indexed by $S_n$, with entries
$$G_\lambda(\sigma,\tau)
=\begin{cases}
1 & \text{if } \sigma \tau^{-1} \in C_\lambda;\\
0 & \text{otherwise.}
\end{cases}$$
We survey some well-known facts about these matrices; an early reference is \cite[Chapter 2]{Delsarte1973}.
The $G_\lambda$, $\lambda \vdash n$ are symmetric pairwise commuting matrices.
They have a common basis of eigenvectors, with eigenvalues indexed by the irreducible characters of $S_n$. For each $\mu \vdash n$, $G_\lambda$ has eigenvalue
$|C_\lambda| \frac{\chi_\mu(\lambda)}{\chi_\mu(1^n)}$
with multiplicity $\chi_\mu(1^n)^2$. 
Being a linear combination of the $G_\lambda$, our matrix $MM^\top$ has eigenvalues given by
\begin{equation}
\label{eq:eigenvalues}
\theta_\mu:=
\sum_{\sigma \in S_n} \ell_n(\mathrm{id},\sigma) \frac{\chi_\mu(\sigma)}{\chi_\mu(1^n)} = 
\frac{\ell_n}{(n-1)!}+
\sum_{1 \not \in \lambda \vdash n} \kappa(C_\lambda) \frac{\chi_\mu(\lambda)}{\chi_\mu(1^n)}.
\end{equation}
To clarify notation, the latter sum is over all partitions $\lambda\vdash n$ with parts of size $>1$ (indexing conjugacy classes of derangements in $S_n$).  Recall also that $\kappa(C_\lambda)$ equals the number of row-normalized latin squares whose second row has cycle type $\lambda$.  Observe
\begin{equation}
\label{eq:ls-sum}
\sum_{1 \not\in \lambda \vdash n} \kappa(C_\lambda) = 
\sum_{\sigma \in S_n \setminus \{\mathrm{id}\}} \ell_n(\mathrm{id},\sigma) = 
(n-1) \ell_n(\mathrm{id}) = \frac{\ell_n}{(n-2)!},
\end{equation}
since every reduced latin square is counted $n-1$ times on both sides.  From \eqref{eq:eigenvalues} and \eqref{eq:ls-sum}, a na\"ive lower bound on $\theta_\mu$ is
\begin{equation}
\label{eq:eig-est}
\theta_\mu \ge
\frac{\ell_n}{(n-1)!}+
\frac{\ell_n}{(n-2)!}
\min_\lambda \frac{\chi_\mu(\lambda)}{\chi_\mu(1^n)}
\ge
\frac{\ell_n}{(n-1)!} \left(1-(n-1) 
\frac{\max_\lambda |\chi_\mu(\lambda)|}{\chi_\mu(1^n)} \right).
\end{equation}
Note that the estimate \eqref{eq:eig-est} does not depend on knowledge of $\kappa(C_\lambda)$.

For $\mu=(n-1,1)$, the character $\chi_\mu$ corresponds to the standard representation of $S_n$.  In this case,  $\chi_\mu(\sigma)$ is one less than the number of fixed points of $\sigma$.  For derangements, this evaluates to $-1$, and $\chi_\mu(1^n)=n-1$. Thus, $\theta_\mu=0$ is an eigenvalue of multiplicity $(n-1)^2$.  This provides another proof of \eqref{eq:dim-Ln}.  For general $\mu$, the following estimate on $\chi_\mu$ evaluated at derangements is helpful.

\begin{lemma}[Larsen and Shalev, \cite{LS2008}]
\label{lem:char-est}
Suppose $\lambda \vdash n$ with $1 \not\in \lambda$. Then $|\chi_\mu(\lambda)| \le \chi_\mu(1^n)^{1/2+o(1)}$.
\end{lemma}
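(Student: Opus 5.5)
Since this is an external result, the plan here is to recover it along the lines of Larsen and Shalev rather than to rely on anything specific to fuzzy latin squares. Fix $\sigma\in C_\lambda$ with all cycle lengths $\lambda_1\ge\cdots\ge\lambda_k\ge 2$, so that $k\le n/2$. The starting point is the Murnaghan--Nakayama rule. It expresses $\chi_\mu(\lambda)$ as a signed sum over sequences of rim hooks of sizes $\lambda_1,\dots,\lambda_k$ removed successively from the Young diagram of $\mu$. This gives the crude bound
$$|\chi_\mu(\lambda)|\le N_\mu(\lambda),$$
where $N_\mu(\lambda)$ is the number of rim hook tableaux of shape $\mu$ and type $\lambda$. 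The whole argument then reduces to comparing $N_\mu(\lambda)$ with $\chi_\mu(1^n)=f^\mu$, the number of standard Young tableaux of shape $\mu$.

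First I would treat the homogeneous case, where every cycle has the same length $m\ge 2$. Here $N_\mu(\lambda)$ is controlled by the $m$-core and $m$-quotient of $\mu$. Rim hook tableaux of type $m^{n/m}$ correspond to standard fillings of the $m$-quotient, counted by a multinomial coefficient times a product of the $f^{\nu^{(i)}}$. Comparing this with the hook length formula for $f^\mu$ gives the Fomin--Lulov type estimate
$$N_\mu(m^{n/m})\le (f^\mu)^{1/m+o(1)}\le (f^\mu)^{1/2+o(1)}.$$

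For general $\lambda$ with mixed parts I would follow Larsen--Shalev. Cycles of length at least $\log n$ (or some slowly growing threshold) number at most $n/\log n$. Each such long hook has at most $\sqrt{2n}$ possible positions, since a rim hook is determined by its endpoints on the boundary of the diagram. Removing them one at a time therefore costs at most $(2n)^{n/(2\log n)}=e^{O(n)\cdot o(1)}$ choices. Since $f^\mu\ge e^{c n}$ unless $\mu$ is close to a hook or a single row, this cost is $(f^\mu)^{o(1)}$ in the generic case. The near-trivial shapes are handled directly, because there $f^\mu$ is polynomial in $n$ and so are the character values.

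The short cycles, of lengths in $[2,\log n)$, I would handle by grouping them by length. I would apply the homogeneous bound iteratively along the chain of successively removed subdiagrams. The key check is submultiplicativity: if $\mu\supseteq\nu$ with $|\mu/\nu|=r$, then $f^\mu\ge f^\nu\cdot(\text{count for }\mu/\nu)$ up to a factor $\binom{n}{r}^{o(1)}$-type loss. Accumulating the exponents $1/m\le1/2$ from each block then yields $|\chi_\mu(\lambda)|\le (f^\mu)^{1/2+o(1)}$.

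The main obstacle is this gluing step. Removing rim hooks in blocks does not factor $f^\mu$ cleanly, since skew shapes enter and the number of ways to interleave blocks can be large. I would first try to bound skew dimensions $f^{\mu/\nu}$ from below by $f^\mu/f^\nu$ divided by $\binom{n}{|\nu|}$, and to keep the number of distinct block lengths below $\log n$ so that the losses stay $(f^\mu)^{o(1)}$. If this bookkeeping becomes too delicate, I would simply quote \cite[Theorem 1.2]{LS2008}, where exactly this exponent $1/2$ is obtained for fixed-point-free $\sigma$.
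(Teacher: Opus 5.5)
The paper gives no proof of this lemma. It is imported directly from \cite{LS2008}, where the fixed-point-free case is the instance $m=2$ of their bound $|\chi(\sigma)|\le\chi(1)^{1/m+o(1)}$ for permutations with at most $n^{o(1)}$ cycles of length less than $m$. Your closing fallback of quoting \cite{LS2008} is therefore exactly the paper's route, and it is all that is needed. The self-contained reconstruction you put in front of it, however, does not work as written and should not be offered as a proof.

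\textbf{(i) Long-cycle count.} The estimate is miscomputed: $(2n)^{n/(2\log n)}=\exp\bigl(\tfrac{n\log(2n)}{2\log n}\bigr)=e^{(1/2+o(1))n}$, not $e^{o(n)}$. With a threshold $L$ your loss is $\exp\bigl(n\log(2n)/(2L)\bigr)$, so you need $L=\omega(\log n)$ just to reach $e^{o(n)}$. As written, the argument fails whenever $\log f^\mu=O(n)$, and for $f^\mu=e^{cn}$ with $c<1$ it is worse than the trivial bound. Your $\sqrt{2n}$ bound on removable $\ell$-rim hooks is true, but not for the reason you give. The correct reason is that cells of hook length $\ell$ lie in distinct rows and columns, so $t$ of them force $t(t+1)/2\le n$.

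\textbf{(ii) The shape dichotomy.} The claim ``$f^\mu\ge e^{cn}$, or else $f^\mu$ is polynomial'' is false. For $\mu=(n-k,k)$ with $k\approx\sqrt n$ one has $\log f^\mu\asymp\sqrt n\log n$. Hence your uniform bound $(2n)^{n/(2L)}$ on long-hook choices is not $(f^\mu)^{o(1)}$ for any polylogarithmic $L$. Hook shapes $(n-k,1^k)$ with $k\asymp n$ have exponential dimension $\binom{n-1}{k}$, so ``close to a hook'' is also the wrong exceptional class; the relevant shapes have $\mu_1$ or $\mu_1'$ close to $n$. Covering the intermediate range needs rim-hook counts that depend on $\mu$ and are weighed against $f^\mu$. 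That uniformity in $\mu$ is the substance of \cite{LS2008}.

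\textbf{(iii) Polynomial-dimension shapes.} Even here, ``the character values are polynomial'' is not enough. For $\mu_1=n-k$ with $k$ fixed you need $|\chi_\mu(\lambda)|=O(n^{k/2})$ against $f^\mu\asymp n^k$. This comes from the character polynomial with $a_1=0$: every surviving monomial of weighted degree at most $k$ then has degree at most $k/2$. Relatedly, the Fomin--Lulov bound has a polynomial prefactor, which is absorbed into $(f^\mu)^{o(1)}$ only when $f^\mu$ is superpolynomial.

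\textbf{(iv) Gluing.} To combine blocks you only need the trivial inequality $\prod_i f^{\nu_{i-1}/\nu_i}\le f^\mu$. What is genuinely missing is a Fomin--Lulov-type bound for skew shapes $\mu/\nu$, where no hook length formula is available. You also need control of the number of intermediate chains $\mu\supset\nu_1\supset\cdots$.
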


\subsection{Proof of Theorem~\ref{thm:dim-Ln}}
\label{sec:proof}

Our goal is to show that the dimension of the $0$-eigenspace of $MM^\top$ equals $(n-1)^2$ for large $n$.  The partition $\mu=(n-1,1)$ corresponds to the standard representation discussed above. Here, $\theta_\mu=0$ with multiplicity $(n-1)^2$.
We argue that, when $n$ is sufficiently large, $\theta_\mu>0$ for all other partitions $\mu \vdash n$, $\mu \neq (n-1,1)$.  

First, from \eqref{eq:eig-est} and Lemma~\ref{lem:char-est}, we get a positive eigenvalue whenever $\chi_\mu(1^n) = \Omega(n^3)$.  By the hook length formula, the only possible exceptions are partitions $\mu \vdash n$ whose Young diagram has `thickness' at most $2$; that is, those having no box coordinates $(i,j)$ with $\min(i,j) > 2$.  Setting aside $(n-1,1)$, the partitions we must consider in more detail are
$$\mu = 1^n, 2^1 1^{n-2}, 2^2 1^{n-4}, (n-2,2), (n).$$

In the simple case $\mu=(n)$, we have $\chi_\mu(\lambda)=1>0$ for all $\lambda \vdash n$ and can immediately conclude $\theta_\mu>0$.  The case $\mu=1^n$ corresponds to the alternating character, with 
$\chi_\mu(\lambda)=\mathrm{sgn}(\lambda)$.  Working from \eqref{eq:eigenvalues} and \eqref{eq:ls-sum},
\begin{align}
\label{eq:alt-eig}
\theta_{1^n} &= \frac{\ell_n}{(n-1)!}+\kappa(A_n)-\kappa(S_n \setminus A_n) 
=\frac{\ell_n}{(n-2)!} \left(\frac{1}{n-1}+\frac{2\kappa(A_n)}{\kappa(S_n)} - 1 \right) \\
\nonumber
&=\frac{\ell_n}{(n-2)!} \left(\frac{1}{n-1} + o(1/n) \right),
\end{align}
where Proposition~\ref{prop:parities} has been used for the last line.  It follows that $\theta_{1^n}>0$ for sufficiently large $n$.

In the remaining cases, formulas for $\chi_\mu(\lambda)$ are easy to compute or estimate from Young tableaux or the Murnaghan-Nakayama rule; \cite{Joy2026} for more details relevant to our cases.
For $\mu=2^1 1^{n-2}$, the hook length formula returns
$\chi_\mu(1^n) = \frac{n!}{n(n-2)!} = n-1$.  
When $n$ is even, $\chi_\mu(n)=1$ and when $n$ is odd $\chi_\mu(3,2^{(n-3)/2})=1$. It follows from \eqref{eq:eig-est} that $\theta_\mu>0$.

Suppose $\mu=(n-2,2)$. We have $\chi_\mu(1^n)=n(n-3)/2$ and $\chi_\mu(2^{n/2})>0$, so again $\theta_\mu>0$.

Suppose $\mu=2^2 1^{n-4}$. As in the previous case, the hook length formula gives $\chi_\mu(1^n)=n(n-3)/2$. For $n \equiv 2 \pmod{4}$, it can happen that $\chi_\mu(\lambda)$ achieves values as small as $-n/2$ when $\lambda=2^{n/2}$. That is, $\min_\lambda \chi_\mu(\lambda)/\chi_\mu(1^n) = -1/(n-3)$. This does not give $\theta_\mu>0$ by \eqref{eq:eig-est} alone, but we can resort to more detailed estimates on characters $\chi_\mu(\lambda)$ and completions $\kappa(C_\lambda)$.  A routine calculation gives the second smallest character ratio.

\begin{lemma}[\cite{Joy2026}]
\label{lem:second-smallest}
Suppose $n \equiv 2 \pmod{4}$, and let $\mu=2^2 1^{n-4} \vdash n$. 
For $\lambda \vdash n, \lambda \neq 2^{n/2}$, the minimum value of $\chi_\mu(\lambda)/\chi_\mu(1^n)$ is $-(n-4)/n(n-3)$, attained at $\lambda=4^1 2^{(n-4)/2}$.
\end{lemma}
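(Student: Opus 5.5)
The plan is to get a closed formula for $\chi_\mu$ on derangement classes and then optimize over cycle types. Since $\mu=2^2 1^{n-4}$ is the conjugate of $(n-2,2)$, I will use $\chi_{\mu}(\lambda)=\mathrm{sgn}(\lambda)\,\chi_{(n-2,2)}(\lambda)$, which is tensoring with the alternating character. For $\chi_{(n-2,2)}$ I will use the decomposition of the permutation character of $S_n$ acting on $2$-subsets of $[n]$:
$$\pi_2=\chi_{(n)}+\chi_{(n-1,1)}+\chi_{(n-2,2)}.$$
A $2$-subset $\{x,y\}$ is fixed by $\sigma$ exactly when $x,y$ are both fixed points or form a $2$-cycle. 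So if $\sigma$ has $f$ fixed points and $m_2$ two-cycles, then $\pi_2(\sigma)=\binom f2+m_2$. Also $\chi_{(n-1,1)}(\sigma)=f-1$, as recalled earlier in the paper. Hence
$$\chi_{(n-2,2)}(\sigma)=\binom f2+m_2-f.$$
For a derangement ($f=0$) this is just $m_2(\lambda)$. Therefore, for $1\notin\lambda$,
$$\chi_\mu(\lambda)=\mathrm{sgn}(\lambda)\,m_2(\lambda),$$
and the ratio is $\chi_\mu(\lambda)/\chi_\mu(1^n)=2\,\mathrm{sgn}(\lambda)m_2(\lambda)/(n(n-3))$, using $\chi_\mu(1^n)=n(n-3)/2$. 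As a sanity check, $\lambda=2^{n/2}$ with $n\equiv 2\pmod 4$ has $n/2$ odd, so it is an odd permutation and the value is $-n/2$. This agrees with the paper's remark.

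Next I minimize over derangement types $\lambda\ne 2^{n/2}$. If $\mathrm{sgn}(\lambda)=+1$, the ratio is $\ge 0$, so only odd $\lambda$ matter. For those we must maximize $m_2(\lambda)$. Write $n=2m_2+r$, where $r$ is the total size of the parts of $\lambda$ that are $\ge 3$.
\begin{itemize}
\item Excluding $2^{n/2}$ forces $r>0$.
\item We cannot have $r=2$, since a part of size at least $3$ cannot sum to $2$.
\item Since $n$ is even, $r$ is even, so $r\ge 4$ and $m_2\le (n-4)/2$.
\end{itemize}
Equality $r=4$ forces the large parts to be exactly a single $4$, since $3+1$ is not allowed. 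So the only candidate is $\lambda=4^1 2^{(n-4)/2}$.

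It remains to check that this candidate is odd, so that it really attains the negative value. A $4$-cycle is odd. The number of $2$-cycles is $(n-4)/2$, which is even because $n\equiv 2\pmod 4$. So the sign is $-1$, and
$$\chi_\mu(\lambda)=-\tfrac{n-4}{2},\qquad \frac{\chi_\mu(\lambda)}{\chi_\mu(1^n)}=-\frac{n-4}{n(n-3)}.$$
Every other odd derangement type has $m_2\le (n-6)/2$, and every even one gives a nonnegative ratio. So this is the minimum, and it is attained at $4^1 2^{(n-4)/2}$, as the lemma states.

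The step I expect to need the most care is the sign bookkeeping. One must apply the conjugate-partition twist correctly and use $n\equiv 2\pmod 4$ precisely to see that $4^1 2^{(n-4)/2}$ is odd while $2^{n/2}$ is also odd. The combinatorial optimization over $m_2$ is routine once $\chi_\mu(\lambda)=\mathrm{sgn}(\lambda)m_2(\lambda)$ is established.
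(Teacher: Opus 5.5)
Your reduction is correct, and it is a cleaner route than the tableau or Murnaghan--Nakayama computation the paper defers to \cite{Joy2026}. Twisting by the sign character and using $\pi_2=\chi_{(n)}+\chi_{(n-1,1)}+\chi_{(n-2,2)}$ gives $\chi_\mu(\lambda)=\mathrm{sgn}(\lambda)\,m_2(\lambda)$ for $1\notin\lambda$. Your bound $m_2(\lambda)\le (n-4)/2$ for derangement types $\lambda\neq 2^{n/2}$, with equality only at $4^12^{(n-4)/2}$, is also right. You should state explicitly that $\lambda$ ranges over derangement types, as in \eqref{eq:eigenvalues}. If fixed points were allowed, your own formula $\mathrm{sgn}(\lambda)\bigl(\binom{f}{2}+m_2-f\bigr)$ would give ratio $-(n-4)/n$ at a transposition.

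\textbf{The gap is the final parity check.} You claim $(n-4)/2$ is even because $n\equiv 2\pmod 4$. In fact $n-4\equiv 2\pmod 4$, so $(n-4)/2$ is odd, and
\[
\mathrm{sgn}\bigl(4^12^{(n-4)/2}\bigr)=(-1)\cdot(-1)^{(n-4)/2}=+1 .
\]
Equivalently, $\mathrm{sgn}(\lambda)=(-1)^{n-\ell(\lambda)}$, and $4^12^{(n-4)/2}$ is one merge away from the odd class $2^{n/2}$, so it must be even. Your sanity check and your final check cannot both hold. Hence $\chi_\mu(4^12^{(n-4)/2})=+(n-4)/2>0$. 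Concretely, for $n=6$ the character $\chi_{2^21^2}$ takes the values $0,\,+1,\,0$ on the classes $6$, $4\,2$, $3^2$. So the minimum over $\lambda\neq 2^3$ is $0$, not $-1/9$.

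The statement as written therefore cannot be proved: both the exact minimum and the clause ``attained at $4^12^{(n-4)/2}$'' are false. (That value and location are the correct minimum over derangement types when $n\equiv 0\pmod 4$, which may be the source of the slip.) Run correctly, your argument shows the following:
\begin{itemize}
\item Every odd derangement type $\lambda\neq 2^{n/2}$ has $r\ge 6$, hence $m_2\le (n-6)/2$.
\item Equality holds only at $6^12^{(n-6)/2}$, which is odd, since a $6$-cycle is odd and $(n-6)/2$ is even.
\item The true minimum is therefore $-(n-6)/(n(n-3))$, attained there; this equals $0$ when $n=6$.
\end{itemize}
What is true, and all that \eqref{eq:f-lam} and the rest of the proof of Theorem~\ref{thm:dim-Ln} use, is the one-sided bound
\[
\frac{\chi_\mu(\lambda)}{\chi_\mu(1^n)}\ \ge\ -\frac{n-4}{n(n-3)}\qquad\text{for all derangement types } \lambda\neq 2^{n/2}.
\]
This follows at once from $|\chi_\mu(\lambda)|=m_2(\lambda)\le (n-4)/2$, with no parity analysis. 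You should prove that inequality instead, and flag that the equality claim fails.
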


For this $\mu$, if we define 
$f(\lambda):=\frac{1}{n-1}+\frac{\chi_\mu(\lambda)}{\chi_\mu(1^n)}$,
then by a minor restatement of Lemma~\ref{lem:second-smallest}, for all $\lambda' \vdash n$, $\lambda' \neq 2^{n/2}$,
\begin{equation}
\label{eq:f-lam}
f(\lambda') \ge \frac{1}{n-1}-\frac{n-4}{n(n-3)} = \frac{2n-4}{n(n-1)(n-3)} = -\frac{n-2}{n} f(2^{n/2})>0.
\end{equation}

To complete the analysis for this eigenvalue, we apply (the right-hand inequality of) Lemma~\ref{lem:merge} with $\lambda=2^{n/2}$ and each of $\lambda'=4^1 2^{(n-4)/2}$, $\lambda''=4^2 2^{(n-8)/2}$,
$\lambda'''=6^1 2^{(n-6)/2}$. Before doing so, we note that for 
$n \ge 8$, we get $|C_{\lambda'}|>n|C_\lambda|$ and similarly for $\lambda''$, $\lambda'''$.
Now, the split-merge inequality gives
$$\kappa(\lambda') \ge n \kappa(\lambda)/2\text{ and }  \ell(1^n,\lambda''),\ell(1^n,\lambda''') \ge n \kappa(\lambda)/4.$$
Using these, \eqref{eq:eigenvalues} and \eqref{eq:f-lam}, we estimate
\begin{align*}
\theta_\mu &>
\sum_{
\lambda^* \in \{\lambda, \lambda',\lambda'',\lambda'''\}}
\kappa(\lambda^*) f(\lambda^*) \\
& \ge
\kappa(\lambda) f(\lambda) -
\kappa(\lambda) \left(\frac{n}{2}+\frac{n}{4}+\frac{n}{4}\right) \frac{n-2}{n} f(\lambda)\\
& = (3-n)  \kappa(2^{n/2}) f(2^{n/2}) = \frac{2}{n-1} \kappa(2^{n/2}) >0.
\end{align*}
In the first line, we have kept only four terms, since the rest are positive.

\subsection{Remarks}

The appendix of \cite{CGW2008} gives counts of $\ell_n(\mathrm{id},\delta)$ for derangements $\delta \in S_n$, $n \le 11$ (according to cycle type of $\delta$). Using these and character tables of $S_n$, we can report that $\dim \mathcal{L}_n = n!-(n-1)^2$ for all $n \le 11$.

Interestingly, we are able to conclude something about the distribution of derangement parities in large latin squares with a purely algebraic argument.  Since $MM^\top$ is positive semidefinite, we know that $\theta_{1^n} \ge 0$.  But from \eqref{eq:alt-eig}, this implies 
$$\frac{\kappa(A_n)}{\kappa(S_n)} \ge \frac{n-2}{2n-2}=\frac{1}{2}-\Theta(1/n).$$
This improves on $\frac{1}{4}-o(1)$ in \cite{CGW2008}, but of course is now subsumed by Proposition~\ref{prop:parities}.

\section{Vanishing squares}
\label{sec:vanishing}

Recall from Section~\ref{sec:dimensions} our definition $\mathcal{W}_{\le n}$ as the set of $\sum_{\sigma} c_\sigma \sigma \in \Q[S_{\le n}]$ such that $\sum_{\sigma} c_\sigma P_\sigma^{\uparrow n}=O$. We call such a $\Q$-linear combination of permutations a \emph{vanishing fuzzy latin square}, or for short a \emph{vanishing square}.  

Vanishing squares are abundant and simple to produce if the number of nonzero terms is unconstrained.  Indeed, for any element $\alpha \in \Q[S_{\le n-1}]$, its associated matrix $\phi(\alpha)$ has constant line sums.  Therefore, from the Birkhoff-von Neumann theorem, there exists $\beta \in \Q[S_n]$ with $\phi(\beta)=-\phi(\alpha)$, and hence $\alpha+\beta \in \ker \phi = \mathcal{W}_{\le n}$.

In the remainder of this section, we collect a few observations on vanishing squares with a prescribed number of terms.

\subsection{Constructions}

We begin with a simple example identified in \cite{CDN2024,KLN2024}.

\begin{ex}
The combination $\perm{1234}-\perm{2134}-\perm{1243}+\perm{2143}$ is a vanishing square with $n=4$, since the associated linear combination of permutation matrices is zero.  More generally, if $X \cup X'$ is a partition of $[n]$ with $|X|,|X'| \ge 2$, $\alpha,\beta$ are distinct permutations of $X$, and $\alpha',\beta'$ are distinct permutations of $X'$, then $\alpha \beta - \alpha' \beta - \alpha \beta' + \alpha'\beta'$ is a vanishing square with four terms.
\end{ex}

In a private communication, the authors of \cite{KLN2024} identified a different class of vanishing squares with four terms.

\begin{ex}
\label{ex:pc}
The combination $\perm{123}-\perm{321}-\frac{3}{4} (\perm{12}-\perm{21})$ is a vanishing square.  The associated combination of $3 \times 3$ matrices is
$$
\begin{bmatrix}
1 & 0 & 0\\
0 & 1 & 0\\
0 & 0 & 1\\
\end{bmatrix}
-\begin{bmatrix}
0 & 0 & 1\\
0 & 1 & 0\\
1 & 0 & 0\\
\end{bmatrix}
-\frac{3}{4} 
\cdot \frac{1}{3}
\begin{bmatrix}
4 & 2 & 0\\
2 & 2 & 2\\
0 & 2 & 4 \\
\end{bmatrix}
+\frac{3}{4}
\cdot \frac{1}{3}
\begin{bmatrix}
0 & 2 & 4\\
2 & 2 & 2\\
4 & 2 & 0\\
\end{bmatrix}
=
\begin{bmatrix}
0 & 0 & 0\\
0 & 0 & 0\\
0 & 0 & 0\\
\end{bmatrix}.
$$
More generally, if $n \ge 3$ is odd, one can produce a vanishing square with permutation lengths $n,n,n-1,n-1$ by joining sub-permutations of length $(n-3)/2$ to the left and right, with those above used on the middle elements.
\end{ex}

The above examples contrast with the situation for non-vanishing fuzzy latin squares. It is not hard to argue that there are only finitely many solutions to \eqref{eq:fuzzy-defn} with $c \neq 0$ and exactly four nonzero terms. Such solutions are classified in \cite[Proposition 4.22]{CDN2024}.

The only three-term vanishing square is $\perm{12}+\perm{21}-2 (\perm{1})$. A simple construction gives vanishing squares with any number of nonzero terms $r \ge 4$.

\begin{prop}
Suppose $r \ge 4$ and $n \ge 2r-4$.  Then there exists a vanishing square with exactly $r$ nonzero terms and all permutation lengths equal to $n$.    
\end{prop}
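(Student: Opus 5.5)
We use the observation from the introduction that for $\sigma\in S_n$ we have $P^{\uparrow n}_\sigma=P_\sigma$. So it suffices to find $r$ distinct permutations of $[n]$ and nonzero rational coefficients whose weighted sum of ordinary permutation matrices is the zero matrix. The plan is to generalize the four-term example $\perm{1234}-\perm{2134}-\perm{1243}+\perm{2143}$ using several disjoint transpositions.

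Put $m=r-2\ge 2$. Since $n\ge 2r-4=2m$, we can choose $m$ pairwise disjoint transpositions $t_1,\dots,t_m\in S_n$, say $t_i=(2i-1\ \ 2i)$, with all remaining points fixed. For each $i$, let $E_i$ be the $n\times n$ matrix supported on the $2\times 2$ block indexed by $\{2i-1,2i\}$, with that block equal to $\begin{bmatrix}1&-1\\-1&1\end{bmatrix}$. Then $P_{t_i}=I-E_i$. Because the blocks are disjoint, the product $t_1t_2\cdots t_m$ has permutation matrix $I-\sum_{i=1}^m E_i$.

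The candidate vanishing square is
$$t_1t_2\cdots t_m \;-\; \sum_{i=1}^m t_i \;+\; (m-1)\,\mathrm{id}.$$
Its associated matrix is
$$\Bigl(I-\sum_i E_i\Bigr)-\sum_i (I-E_i)+(m-1)I \;=\; O.$$

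It remains to check that this has exactly $r$ nonzero terms. The permutations $\mathrm{id}, t_1,\dots,t_m, t_1\cdots t_m$ are pairwise distinct, because $m\ge 2$ and the transpositions are disjoint; this gives $m+2=r$ permutations. The coefficients $1$, $-1$ (appearing $m$ times) and $m-1$ are all nonzero, again because $m\ge 2$, which is exactly where $r\ge 4$ is used. When $r=4$ the construction recovers the example above.

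There is no real obstacle in this argument. The only points needing care are distinctness of the terms and the nonvanishing of the coefficient $m-1$, and both follow from $r\ge 4$.
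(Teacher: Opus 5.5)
Your proof is correct and is the paper's argument: the paper takes $r-2$ disjoint transpositions $\sigma_1,\dots,\sigma_{r-2}$ with product $\sigma$, notes $P_{\sigma_1}+\cdots+P_{\sigma_{r-2}}-P_\sigma=(r-3)I$, and obtains the vanishing square $\sigma_1+\dots+\sigma_{r-2}-\sigma-(r-3)\mathrm{id}$, which is the negative of yours. Your explicit checks that the $r$ permutations are distinct and that the coefficient $m-1$ is nonzero fill in details the paper leaves implicit.
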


\begin{proof}
Let $\sigma_1,\sigma_2,\dots,\sigma_{r-2}$ be disjoint transpositions in $S_n$, and put $\sigma:=\sigma_1 \sigma_2 \cdots \sigma_{r-2}$. An element in $[n]$ fixed by $\sigma$ is fixed by all of the $\sigma_i$, while an element not fixed by $\sigma$ appears in exactly one of the transpositions $\sigma_i$.
This implies
$$P_{\sigma_1}+\cdots+P_{\sigma_{r-2}}-P_\sigma=(r-3)I,$$
and hence that $\sigma_1+\dots+\sigma_{r-2}-\sigma-(r-3)\mathrm{id}$ is a vanishing square with $r$ distinct terms.
\end{proof}

\subsection{Four-term vanishing squares}

Here, we give a partial classification of four-term vanishing squares. The broad method is to use bounds on the number of nonzero entries in fuzzy permutation matrices, together with polynomial degree analysis. 

\begin{lemma}[\cite{CDN2024}]
\label{lem:first-row}
Suppose $\sigma \in S_k$ and $n\ge k$.  The first row of $P_\sigma^{\uparrow n}$ contains exactly $n-k+1$ nonzero entries, and any element is repeated at most twice.
\end{lemma}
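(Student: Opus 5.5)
Set $x=1$ in \eqref{eq:fuzzy-defn}. Then $\binom{x-1}{i-1}=\binom{0}{i-1}$ vanishes unless $i=1$, so only the $i=1$ term of the sum survives. Writing $a=\sigma(1)$, the first row is
$$P_\sigma^{\uparrow n}(1,y)=\frac{(n-k)!}{\binom{n}{k}}\binom{n-1}{k-1}\binom{y-1}{a-1}\binom{n-y}{k-a},\qquad 1\le y\le n.$$
The prefactor is a positive constant, so the question reduces to the function $g(y)=\binom{y-1}{a-1}\binom{n-y}{k-a}$ on $[n]$.

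The plan has two steps: count the support of $g$, then analyse its shape to bound repetitions.

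\emph{Support.} The factor $\binom{y-1}{a-1}$ is nonzero iff $y\ge a$, and $\binom{n-y}{k-a}$ is nonzero iff $y\le n-k+a$. So $g(y)\ne0$ exactly on the interval $a\le y\le n-k+a$, which contains $n-k+1$ integers. This gives the first claim.

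\emph{Repetitions.} On the support, $g$ is positive, and I will show it is strictly log-concave in the sense that it strictly increases and then strictly decreases, with at most one flat step at the top. For consecutive $y,y+1$ in the support,
$$\frac{g(y+1)}{g(y)}=\frac{y}{y-a+1}\cdot\frac{n-y-k+a}{n-y},$$
interpreting the expression suitably at the endpoints. The first factor $y/(y-a+1)$ is nonincreasing in $y$: it equals $1+(a-1)/(y-a+1)$. The second factor $1-(k-a)/(n-y)$ is nonincreasing as $y$ increases. Both factors are positive.

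The main obstacle is ensuring the ratio is \emph{strictly} decreasing, so that it can equal $1$ at most once. Strictness holds unless both $a=1$ and $a=k$, that is, unless $k=1$. When $k=1$ the function $g$ is constant and equal to $1$, so any value repeats $n$ times. I would therefore treat $k\ge2$ as the intended case, or note that the lemma presumably assumes this. For $k\ge 2$, at least one factor is strictly decreasing while the other is nonincreasing, so the ratio is strictly decreasing.

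Hence the sequence $g(y)$ on its support is strictly increasing, then possibly has exactly one equal consecutive pair, then is strictly decreasing; it is strictly unimodal with a plateau of length at most $2$. Any value $v$ is therefore attained at most once on the increasing part and at most once on the decreasing part. If $v$ is the plateau value, those two occurrences are exactly the plateau pair. Either way each value occurs at most twice.
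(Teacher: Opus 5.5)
Your proof is correct. The paper gives no argument of its own here; it imports the lemma from \cite{CDN2024}. Your direct computation is the natural self-contained proof: at $x=1$ only the $i=1$ term survives, the support is the interval $\sigma(1)\le y\le n-k+\sigma(1)$, and the ratio $g(y+1)/g(y)$ is strictly decreasing for $k\ge 2$. That gives strict unimodality with a plateau of length at most two.

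Your $k=1$ caveat is a genuine correction to the statement as written. For $k=1$ and $n\ge 3$ the first row is the constant $(n-1)!/n$, so the repetition claim needs $k\ge 2$. The claim should also be read as concerning nonzero values only, since $0$ occurs $k-1$ times in that row.

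The hedge about `interpreting the expression suitably at the endpoints' can be dropped. For consecutive support points, $a\le y\le n-k+a-1$, the quantities $y$, $y-a+1$, $n-y-k+a$ and $n-y$ are all positive integers, so the ratio formula holds as written.
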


For a matrix $A$, let $\|A\|_0$ denote the number of nonzero entries of $A$. Although $\| \cdot \|_0$ satisfies the triangle inequality, it fails the homogeneity property of a norm.  Of course, an $n \times n$ permutation matrix $P$ satisfies $\|P\|_0=n$, and the following bounds apply to fuzzy permutation matrices.

\begin{lemma}[\cite{Joy2026}]
\label{lem:zero-bounds}
Suppose $\sigma \in S_k$ and $n\ge k$.  Then
$$n^2-k(k-1) \le \|P_\sigma^{\uparrow n}\|_0 \le k(n-k+1)^2.$$
\end{lemma}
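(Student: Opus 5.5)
The plan is to identify the support of $P_\sigma^{\uparrow n}$ exactly as a union of $k$ square boxes, and then count. Write $m:=n-k$. Every summand in \eqref{eq:fuzzy-defn} is a nonnegative product of binomial coefficients, so $P_\sigma^{\uparrow n}(x,y)\neq 0$ if and only if some index $i$ gives a nonzero term. The factor $\binom{x-1}{i-1}\binom{n-x}{k-i}$ is nonzero exactly when $i\le x\le i+m$. Likewise the $y$-factor is nonzero exactly when $\sigma(i)\le y\le \sigma(i)+m$. Hence the support is
$$\bigcup_{i=1}^k R_i,\qquad R_i:=[i,i+m]\times[\sigma(i),\sigma(i)+m],$$
where each $R_i$ is an $(m+1)\times(m+1)$ box of cells.

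\emph{Upper bound.} This is immediate from the union bound: $\|P_\sigma^{\uparrow n}\|_0\le \sum_i |R_i| = k(m+1)^2 = k(n-k+1)^2$.

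\emph{Lower bound.} I will count zeros row by row.

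First, fix a row $x$ and let $I_x:=\{i\in[k]: i\le x\le i+m\}$. The covered cells in row $x$ form the union of the column intervals $[\sigma(i),\sigma(i)+m]$ over $i\in I_x$.

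Second, bound this union from below. Because $\sigma$ is injective, these $t_x:=|I_x|$ intervals have distinct left endpoints. Sort the endpoints as $a_1<\dots<a_{t_x}$. Then each interval after the first contributes at least its right endpoint $a_j+m$ as a new cell, since that endpoint exceeds all earlier right endpoints. So the union has size at least $m+t_x$ when $t_x\ge1$.

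Third, deduce the zero count per row. Row $x$ has at most $n-m-t_x=k-t_x$ zero entries. This also holds when $t_x=0$, because then the row has $n\ge k$ zeros; in fact $t_x\ge1$ always, since $I_x$ contains $\min(k,x)$.

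Finally, sum over rows, swapping the order of summation:
$$\sum_{x=1}^n t_x=\sum_{i=1}^k |\{x: i\le x\le i+m\}| = k(m+1).$$
So the total number of zeros is at most $nk-k(m+1)=k(k-1)$. This gives $\|P_\sigma^{\uparrow n}\|_0\ge n^2-k(k-1)$.

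The only delicate step is the second one: the injectivity of $\sigma$ is what ensures the intervals in a row have distinct starts, and that is what turns the per-row covering estimate into the exact global total $k(k-1)$. Everything else is routine bookkeeping. As a sanity check, when $k=n$ (so $m=0$) both bounds equal $n$, as they should for a permutation matrix.
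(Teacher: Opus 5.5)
Your proof is correct and complete. Note that the paper does not prove this lemma itself; it quotes it from the thesis \cite{Joy2026}, so there is no in-paper argument to compare yours against.

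Your approach reads off the support of $P_\sigma^{\uparrow n}$ as the union of the $k$ boxes $R_i$, using where the binomial coefficients vanish. This is the natural route, and it is the same mechanism behind Lemma~\ref{lem:first-row}. Indeed, you recover that lemma: $t_1=1$, so row $1$ has exactly $n-k+1$ nonzero entries. The key step is the per-row covering bound $m+t_x$. It relies on injectivity of $\sigma$, which gives distinct left endpoints and hence strictly increasing right endpoints. Combined with the double count $\sum_x t_x=k(m+1)$, this gives exactly $k(k-1)$ zeros as the upper bound.

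One sentence is wrong as written: the claim that the bound ``also holds when $t_x=0$, because then the row has $n\ge k$ zeros.'' A row with no covering box would have $n$ zeros, and $n$ exceeds $k-t_x=k$ whenever $n>k$, so the justification runs backwards. It is harmless only because that case never occurs: as you then observe, $\min(k,x)\in I_x$ forces $t_x\ge 1$. You should simply delete the clause.

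As a side remark, your computation also shows the lower bound is sharp. For $\sigma=\mathrm{id}$, the support is exactly the band $|x-y|\le n-k$, and its complement has $k(k-1)$ cells.
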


\begin{theorem}
Suppose $c_1 \sigma_1+c_2 \sigma_2+c_3 \sigma_3+c_4 \sigma_4$ is a vanishing square where $\sigma_i \in S_{n_i}$ and wlog $n=n_1 \ge n_2 \ge n_3 \ge n_4 \ge 1$.  Then $(n_1,n_2,n_3,n_4) \in 
\{(n,n,n,n),(n,n,n-1,n-1)\}$ for some integer $n \ge 3$ or
$(n_1,n_2,n_3,n_4) \in \{(3,3,3,2),(3,3,3,1),(4,4,3,2)\}$.
\end{theorem}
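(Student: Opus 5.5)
We argue inside the common frame of size $n=n_1$: the relation $\sum_i c_i P_{\sigma_i}^{\uparrow n}=O$ holds with all $c_i\neq 0$. The plan combines a support-counting step, which forces $n_2=n$, with a polynomial-degree step, which controls the two shortest lengths.

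\textbf{Step 1: support counting.} Since $P^{\uparrow n}_{\sigma_1}=P_{\sigma_1}$ has exactly $n$ nonzero entries, we have
$$P_{\sigma_1}=-c_1^{-1}\sum_{i\ge2}c_iP_{\sigma_i}^{\uparrow n}.$$
Suppose first that $n_2<n$, so every other length is at most $n-1$. Look at the first row and apply Lemma~\ref{lem:first-row}: each $P_{\sigma_i}^{\uparrow n}$ with $i\ge 2$ has at least $n-n_i+1\ge2$ nonzero entries there, while $P_{\sigma_1}$ has exactly one. Three such vectors must cancel to leave a single nonzero entry.

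A cleaner global route uses Lemma~\ref{lem:zero-bounds}. By the triangle inequality for $\|\cdot\|_0$,
$$\|P_{\sigma_2}^{\uparrow n}\|_0\le n+\|P_{\sigma_3}^{\uparrow n}\|_0+\|P_{\sigma_4}^{\uparrow n}\|_0,$$
and the same holds with any index singled out. The lower bound $n^2-k(k-1)$ is large when $k$ is small, and the upper bound $k(n-k+1)^2$ is small when $k$ is close to $n$. Comparing these should force at most one short permutation unless $n$ is tiny. In particular it should give $n_2=n$ for $n$ beyond a small threshold, except that a configuration like $(n,n-1,n-1,\cdot)$ still needs the degree step below.

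\textbf{Step 2: polynomial degrees.} Regard each entry $P_\sigma^{\uparrow n}(x,y)$, with $\sigma\in S_k$, as a polynomial in $(x,y)$ for fixed $n$. In row $x=1$ it is $\frac{(n-k)!}{\binom nk}\binom{y-1}{\sigma(1)-1}\binom{n-y}{k-\sigma(1)}$, a polynomial in $y$ of degree exactly $k-1$, whose zeros are $y\in\{1,\dots,\sigma(1)-1\}\cup\{n-k+\sigma(1)+1,\dots,n\}$.

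Take $n_1=n_2=n$. In row $1$ the two length-$n$ terms contribute point masses at $\sigma_1(1)$ and $\sigma_2(1)$. These must be cancelled by $c_3$ and $c_4$ times two vectors whose supports are intervals of lengths $n-n_3+1$ and $n-n_4+1$.
\begin{itemize}
\item If $n_3=n_4=n$, this is the first allowed case.
\item If $n_3<n$, the combination $c_3v_3+c_4v_4$ of these binomial vectors must be supported on at most two points. Applying this simultaneously to the first and last rows and columns (and their reflections) should force $n-n_3+1=n-n_4+1=2$, since a length-$k$ pattern spread over more than two points cannot be cancelled by one other such vector with a different support interval. Matching the supports then gives $n_3=n_4=n-1$.
\end{itemize}

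\textbf{Step 3: small cases.} For the finitely many small $n$ that Steps 1 and 2 do not cover (roughly $n\le 4$ or $5$), enumerate directly: solve the linear systems over all quadruples of permutations. This should produce exactly the sporadic tuples $(3,3,3,2)$, $(3,3,3,1)$ and $(4,4,3,2)$, and verify that nothing else arises; we would do this by computer, possibly also using Example~\ref{ex:pc}.

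\textbf{Main obstacle.} The hardest step is the case $n_2<n$ in Step 1, where a single genuine permutation matrix is balanced against three fuzzy ones. First-row data alone may allow cancellations, so the main attempt is the $\|\cdot\|_0$ inequality via Lemma~\ref{lem:zero-bounds}. If that fails for intermediate lengths, we would compare degrees in $y$ along every boundary row and column: the highest-degree term must cancel with another term of equal length, which forces the lengths to pair up.
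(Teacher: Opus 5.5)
Your plan has a genuine gap, and it sits exactly where the statement itself fails. Your case split in Step~2 (``$n_3=n_4=n$'' versus ``$n_3<n$'') omits the case $n_3=n>n_4$. There, row~1 and Lemma~\ref{lem:first-row} give $n_4\ge n-2$, and Lemma~\ref{lem:zero-bounds} confines $(n,n,n,n-2)$ to $n\le 3$ via $5n-6\le 3n$. However, $(n,n,n,n-1)$ survives every support count ($3n-2\le 3n$), and it occurs beyond $(3,3,3,2)$. One checks directly that
\[
5P_{\perm{1234}}+4P_{\perm{1324}}+3P_{\perm{2143}}=\begin{bmatrix}9&3&0&0\\3&5&4&0\\0&4&5&3\\0&0&3&9\end{bmatrix}=4P^{\uparrow 4}_{\perm{123}},
\]
where the last matrix is the one displayed in Figure~\ref{fig:fuzzy}. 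So $5(\perm{1234})+4(\perm{1324})+3(\perm{2143})-4(\perm{123})$ is a vanishing square with length list $(4,4,4,3)$, which is not on the list. The statement therefore cannot be proved as written, and your Step~3 search would not return ``exactly the sporadic tuples''. The paper disposes of this case by asserting that one may take $c_1,c_2>0>c_3$. That is false: the three length-$n$ coefficients share a sign here, and also in the paper's own example $\perm{123}+\perm{132}+\perm{213}-\frac32\perm{12}$. Its argument, which concludes $n\le 2$, would exclude $(3,3,3,2)$ too.

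Apart from this, Steps~1 and~2 are not yet proofs.

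Support counting cannot force $n_2=n$. The triangle inequality only bounds the support of a sum from above; for $(n,n-1,n-1,n-1)$ it gives $3n-2\le n+2(4n-4)$, which is no constraint. Also, ``at most one short permutation'' is contradicted by the family $(n,n,n-1,n-1)$.

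What works is the degree argument you hold in reserve, which is the content of Lemma~\ref{lem:list-drop} that the paper imports from \cite{CDN2024}. For fixed $x$, the map $y\mapsto\sum_i c_iP^{\uparrow n}_{\sigma_i}(x,y)$ is a polynomial of degree at most $n-1$ vanishing on $[n]$, hence zero. A lone length-$n$ term contributes a nonzero $y^{n-1}$-coefficient, so $n_2=n$. Likewise, suppose $n_3\le n-2$. Then in every row $x$, the function $c_1P_{\sigma_1}(x,\cdot)+c_2P_{\sigma_2}(x,\cdot)$ is nonzero at no more than two points of $[n]$ and agrees there with a polynomial of degree at most $n-3$. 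So it vanishes in every row, and $\sigma_1=\sigma_2$. You need all rows here; boundary rows and columns alone, as you propose, cannot rule out $\sigma_1(1)=\sigma_2(1)$ with $c_1=-c_2$.

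With $n_3=n-1$, Lemma~\ref{lem:first-row} then gives only $n_4\ge n-3$. Your claim that row~1 forces $n_4=n-1$ fails. When the two-point support of $v_3$ lies inside the three-point support of $v_4$, cancelling one coordinate leaves two entries for the point masses to absorb. This is exactly what happens in $(4,4,3,2)$.

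So you still need an explicit bound on $n$ for $(n_3,n_4)\in\{(n-1,n-2),(n-1,n-3)\}$ before Step~3 becomes a finite check. Note that Lemma~\ref{lem:zero-bounds} alone gives only $\|c_1P_{\sigma_1}+c_3P^{\uparrow n}_{\sigma_3}\|_0\le n+4(n-1)$, which does not beat $5n-6$ in the first case. The paper's bound $4n-2$ there therefore needs more than that lemma.
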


\begin{proof}
From \cite{CDN2024}, the only possible permutation lengths satisfy $(n_1,n_2)=(n,n)$ and $$(n_3,n_4) \in \{(n,n),(n,n-1),(n,n-2),(n-1,n-1),(n-1,n-2),(n-1,n-3)\}.$$
It suffices to rule out the cases not covered in the statement.

Suppose $(n_3,n_4)=(n,n-1)$. As a polynomial in either the row index $x$ or column index $y$, $c_1 P_{\sigma_1}^{\uparrow n}+c_2 P_{\sigma_2}^{\uparrow n}+c_3 P_{\sigma_3}^{\uparrow n} = -c_4 P_{\sigma_4}^{\uparrow n}$ has degree $n-2$.  We may therefore assume wlog that $c_1,c_2>0$ and $c_3<0$. From this, we also have $c_4<0$. But then from Lemma~\ref{lem:zero-bounds},
$$3n-2 \le \|c_3 P_{\sigma_3}^{\uparrow n}+c_4 P_{\sigma_4}^{\uparrow n}\|_0 = \|c_1 P_{\sigma_1}^{\uparrow n}+c_2 P_{\sigma_2}^{\uparrow n}\|_0 \le 2n.$$
This forces $n \le 2$, a contradiction.

Suppose $(n_3,n_4)=(n,n-2)$. 
By Lemma~\ref{lem:zero-bounds},
$$5n-6 \le \| P_{\sigma_4}^{\uparrow n} \|_0 = 
\|c_1 P_{\sigma_1}^{\uparrow n}+c_2 P_{\sigma_2}^{\uparrow n}+c_3 P_{\sigma_3}^{\uparrow n}\|_0 \le 3n.$$
This implies $n \le 3$, as required.

Suppose $(n_3,n_4)=(n-1,n-2)$. Wlog, $c_1,c_3>0$ and $c_2,c_4<0$.
Similar to the cases above, we have the bounds
$$5n-6\le\|c_2 P_{\sigma_2}^{\uparrow n}+c_4 P_{\sigma_4}^{\uparrow n} \|_0 = 
\|c_1 P_{\sigma_1}^{\uparrow n}+c_3 P_{\sigma_3}^{\uparrow n}\|_0 \le 4n-2.$$
This implies $n \le 4$.

Suppose $(n_3,n_4)=(n-1,n-3)$. 
By Lemma~\ref{lem:zero-bounds},
$$7n-12 \le \| P_{\sigma_4}^{\uparrow n} \|_0 = 
\|c_1 P_{\sigma_1}^{\uparrow n}+c_2 P_{\sigma_2}^{\uparrow n}+c_3 P_{\sigma_3}^{\uparrow n}\|_0 \le 5n-2.$$
This implies $n \le 5$, and a short computer search rules out any vanishing squares of this type.
\end{proof}

Below we give vanishing squares for the three exceptional tuples. Minor variations on these are also possible; \cite{Joy2026} for more details.

\begin{itemize}
    \item
$(3,3,3,2)$: $\perm{123}+\perm{132}+\perm{213}-\frac{3}{2}\perm{12}$,
    \item
$(3,3,3,1)$: $\perm{123}+\perm{231}+\perm{312}-\frac{3}{2}\perm{1}$,
    \item
$(4,4,3,2)$: $\perm{2143}+\perm{3412}+\frac{4}{3} \perm{123}-\perm{12}$.
\end{itemize}

\section{Fuzzy latin squares with at most six terms}
\label{sec:six-terms}

Consider a fuzzy latin square $\alpha=\sum_{i=1}^r c_i \sigma_i$
in which $c_i \neq 0$ for each $i \in [r]$. We say that $\alpha$ is \emph{irreducible} if, whenever $\alpha'=\sum_{i=1}^r c'_i \sigma_i$ is a fuzzy latin square we have $\alpha'= c \alpha$ for some constant $c \in \Q$.
Up to scaling, the only irreducible fuzzy latin square using the unique permutation in $S_1$ is $\perm{1}$, and the only irreducible fuzzy latin square using both permutations in $S_2$ is $\perm{12}+\perm{21}$.

In this section, we classify the irreducible non-vanishing fuzzy latin squares with six terms.  A similar classification for five or fewer terms was given in \cite{CDN2024}.  From the remarks above, we may ignore permutations of length $1$ and assume that there is at most one occurrence of a permutation of length $2$.  In other words, our classification omits those six-term fuzzy latin squares which arise from simply adding (multiples of) $\perm{1}$ or $\perm{12}+\perm{21}$ to a fuzzy latin square with $r<6$ nonzero terms.  Since our focus is on non-vanishing squares, we assume $c=1$ on the right side of \eqref{eq:fuzzy2}.

The dihedral group $D_4$ acts naturally on $\Q^{n \times n}$ by rotating and reflecting entries.  For instance, reflection in the main diagonal maps $A$ to $A^\top$; rotation by $90^\circ$ clockwise maps $A$ to $B$, with entries $B_{ij}=A_{j,n+1-i}$.  Permutation matrices map to other permutation matrices under any element of $D_4$, so we can consider the induced action on permutations.  As in \cite{CDN2024}, our classification of fuzzy latin squares can be simplified by giving a single representative for each orbit under $D_4$ symmetry.  In particular, we can assume that $\sigma_1(1) \le \lceil n/2 \rceil$.

Let $k_i$ denote the length of $\sigma_i$ for each $i \in [r]$, and assume without loss of generality that $k_1 \ge k_2 \ge \cdots \ge k_r$.  We call $(k_1,\dots,k_r)$ the \emph{length list} of $\alpha = \sum_{i=1}^r c_i \sigma_i$.  If $\alpha$ is a fuzzy latin square in $\Q[S_{\le n}]$, then we may assume $k_1=n$.  In fact, for $n >1$ it is also the case that $k_2=n$, as we discuss below.  

In \cite{CDN2024}, two different constraints were given on the length list of a fuzzy latin square with $r$ terms.  One inequality, for non-vanishing squares, prevents the length list from `staying too large'.  The key idea behind this bound is that
the first row of $J$ on the right side of \eqref{eq:fuzzy2} needs to be covered by nonzero entries from the terms on the left.

\begin{lemma}[\cite{CDN2024}]
\label{lem:list-high}
Suppose $(k_1,\dots,k_r)$ is the length list of a non-vanishing fuzzy latin square.  Then
$\displaystyle n-2 \le \sum_{i=1}^{r-1} (n-k_i+1).$
\end{lemma}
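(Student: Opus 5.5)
We prove Lemma~\ref{lem:list-high} by restricting \eqref{eq:fuzzy2} to its first row. With $c=1$, the first row of $\sum_i c_i P_{\sigma_i}^{\uparrow n}$ must equal the all-ones vector of length $n$. The plan is to count nonzero entries in that row term by term, using Lemma~\ref{lem:first-row}, and to exploit one distinguished term whose support is too small to cover the row by itself.

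\textbf{Step 1 (row supports).} From \eqref{eq:fuzzy-defn} with $x=1$, only the summand $i=1$ survives. So the first row of $P_\sigma^{\uparrow n}$ is proportional to
$$y \mapsto \binom{y-1}{\sigma(1)-1}\binom{n-y}{k-\sigma(1)}.$$
This is supported exactly on the interval $\sigma(1)\le y\le n-k+\sigma(1)$, which has $n-k+1$ entries, consistent with Lemma~\ref{lem:first-row}. On this interval it is a polynomial in $y$ of degree $k-1$.

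\textbf{Step 2 (choose the excluded term).} Order the terms so that $\sigma_r$ is the one whose first-row support is the one we omit. The natural choice is a term of maximum length, $k_r=n$ after reordering; since $k_1=k_2=n$, such terms exist. Its first row is then a single unit vector $e_{\sigma_r(1)}$. The remaining $r-1$ terms must supply the rest of the all-ones row. Every column $y\neq \sigma_r(1)$ must lie in the support of some $P_{\sigma_i}^{\uparrow n}$ with $i<r$. That gives $n-1\le \sum_{i\ne r}(n-k_i+1)$, which is stronger than required.

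\textbf{Step 3 (matching the stated indexing).} The statement sums over $i\le r-1$ with the list sorted so that $k_r$ is the smallest length, so removing the largest term is not directly the claim. We therefore argue the sorted version instead and exclude the term with the largest support, namely $\sigma_r$ of smallest length $k_r$. Write $S_i$ for the support interval of term $i$ and $U=\bigcup_{i<r}S_i$. If $|U|\ge n-2$ we are done, so suppose $|U|\le n-3$. Then on the complement of $U$, a set of at least $3$ columns, the first row of $c_rP_{\sigma_r}^{\uparrow n}$ alone equals $1$.

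On its support interval, that row is a polynomial $p(y)$ of degree $k_r-1$ whose roots lie at $\sigma_r(1)-1,\dots,1$ and at $n-k_r+\sigma_r(1)+1,\dots,n$ counted from the ends. The main obstacle is to show that $p$ cannot equal $1$ at three or more points outside $U$. The tool is Lemma~\ref{lem:first-row}: any value of a first row is repeated at most twice. Hence $c_rP_{\sigma_r}^{\uparrow n}$ takes the value $1$ in at most two columns, so at most two columns lie outside $U$. This contradicts $|U|\le n-3$ and yields $|U|\ge n-2$. Therefore
$$n-2\le |U|\le \sum_{i=1}^{r-1}(n-k_i+1).$$

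The degenerate case where $k_r$ is large enough that the $\sigma_r$ row has few nonzero entries is harmless, because the bound of at most two repeated values still applies. I expect this unimodality/repetition step to be the only delicate point. If Lemma~\ref{lem:first-row} did not apply directly, I would instead prove that $p$ is strictly unimodal on its support, being a product of two monotone binomial factors.
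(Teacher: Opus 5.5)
Your Step~3 is the argument the paper cites from \cite{CDN2024}: every column outside $U=\bigcup_{i<r}S_i$ must get the value $1$ from $c_rP_{\sigma_r}^{\uparrow n}$ alone, and Lemma~\ref{lem:first-row} allows at most two such columns, so your proof is correct and follows the same route (Step~2 is an unnecessary detour). One caveat: the case that actually needs care is $k_r=1$, not large $k_r$, because the first row of $P_{\perm{1}}^{\uparrow n}$ is constant, so the repetition bound fails there and so does the lemma itself (for example, a four-term vanishing square of length $n\ge 7$ plus $\perm{1}$), which means one needs $k_r\ge 2$, as the paper tacitly assumes by discarding length-one permutations in this section.
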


A second constraint prevents the length list from `dropping too quickly' via polynomial degree considerations.  This constraint assumes no partial sum in \eqref{eq:fuzzy2} is vanishing, which is certainly the case for irreducible fuzzy latin squares.

\begin{lemma}[\cite{CDN2024}]
\label{lem:list-drop}
Suppose $(k_1,\dots,k_r)$ is the length list of an irreducible fuzzy latin square.  Then for each $j=1,\dots,r$, we have
$\displaystyle k_j \ge n+1-\sum_{i=1}^{j-1} (n-k_i+1).$
\end{lemma}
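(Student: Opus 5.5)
The plan is to argue by contradiction using polynomial degrees, with the first row of the matrices playing the role of a ``test line''. The statement is only meaningful for $j \ge 2$, since for $j=1$ the empty sum would give $k_1 \ge n+1$; I would first check how \cite{CDN2024} handles this and read the lemma as a statement for $j=2,\dots,r$.

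Write $m_i := n-k_i+1$. In \eqref{eq:fuzzy-defn}, each summand $\binom{x-1}{i-1}\binom{n-x}{k-i}\binom{y-1}{\sigma(i)-1}\binom{n-y}{k-\sigma(i)}$ is a polynomial of degree $k-1$ in $x$ and of degree $k-1$ in $y$. So $P_\sigma^{\uparrow n}$ is the restriction to $[n]^2$ of a polynomial $p_\sigma(x,y)$ of degree at most $k-1$ in each variable. The same holds for $J$, which has degree $0$.

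Fix $j \ge 2$ and suppose $k_j \le n-\sum_{i<j} m_i$. Rewrite \eqref{eq:fuzzy2} as
$$D:=\sum_{i<j} c_i P_{\sigma_i}^{\uparrow n} = cJ-\sum_{i \ge j} c_i P_{\sigma_i}^{\uparrow n}.$$
Since $k_j \ge k_{j+1} \ge \cdots$, the right side agrees on $[n]^2$ with a polynomial of degree at most $k_j-1$ in $y$ for each fixed $x$. Now restrict to the first row $x=1$. Only the $i=1$ summand of \eqref{eq:fuzzy-defn} survives there, and by Lemma~\ref{lem:first-row} it is nonzero exactly on a window of $m_i$ consecutive columns. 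Hence $D(1,\cdot)$ vanishes on at least $n-\sum_{i<j} m_i \ge k_j$ points of $[n]$. A polynomial in $y$ of degree at most $k_j-1$ with at least $k_j$ zeros is identically zero, so $D(1,y)=0$ for all $y \in [n]$. The same argument applies to the last row and, with $x$ and $y$ interchanged, to the first and last columns, using the $D_4$ symmetry.

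The main obstacle is upgrading ``$D$ vanishes on the boundary lines'' to ``$D \equiv 0$''. Once $D \equiv 0$ is known, $\sum_{i<j} c_i\sigma_i$ is a nonzero vanishing square, hence a fuzzy latin square supported on a proper subset of the $\sigma_i$. It is therefore not a multiple of $\alpha$, which contradicts irreducibility. The same contradiction holds in the vanishing case, since $\alpha$ and its partial sum would not be proportional.

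For the upgrade, I would work in the Bernstein-type basis $B^{(k)}_a(x)=\binom{x-1}{a-1}\binom{n-x}{k-a}$, $a \in [k]$, which is a basis for polynomials of degree at most $k-1$. In this basis $p_\sigma=\mathrm{const}\cdot\sum_a B^{(k)}_a(x)B^{(k)}_{\sigma(a)}(y)$. The identity $D = (\text{low-degree part})$ holds as a polynomial identity, by interpolation on $[n]^2$ with degree at most $n-1$ in each variable. I would first try an induction over rows. Divide $D$ by the factor $(x-1)$, which it now has, and note that this lowers the relevant window and degree counts by one, exactly as the boundary row did. If that fails, the fallback is a direct coefficient comparison: expand everything in the degree-$(n-1)$ Bernstein basis via degree elevation, and show that the top-degree part forced by the $\sigma_i$ with $i<j$ cannot be cancelled unless all $c_i$ with $i<j$ are zero.
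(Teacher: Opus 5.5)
Your first step is correct, and you are right that the statement must be read for $j\ge 2$. Assume $k_j\le n-\sum_{i<j}m_i$. The first-row count then shows that $D=\sum_{i<j}c_iP^{\uparrow n}_{\sigma_i}$ vanishes on row $1$, and by symmetry on row $n$ and on columns $1$ and $n$.

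\textbf{The gap.} For $j\ge 3$ this is not yet a contradiction, because first-row windows of different terms can cancel. Take $\sigma_1,\sigma_2\in S_n$ and $\sigma_3\in S_{n-1}$ with $\sigma_3(1)=\sigma_1(1)=b$ and $\sigma_2(1)=b+1$. Row $1$ of $c_3P^{\uparrow n}_{\sigma_3}$ is $c_3\frac{n-1}{n}\bigl((n-b)e_b+b\,e_{b+1}\bigr)$, and suitable $c_1,c_2$ cancel it. Irreducibility is contradicted only once you know $D\equiv 0$, and that is exactly the step you leave as the main obstacle. As written, your argument proves only two cases:
\begin{itemize}
\item $j=2$, since a single window cannot vanish;
\item all $\sigma_i$ with $i<j$ of length $n$, since then every row of $D$ has at most $j-1=\sum_{i<j}m_i$ nonzero entries and the row-$1$ argument applies to every row.
\end{itemize}

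\textbf{Why your two routes do not close it.} In the row induction, dividing by $(x-1)$ lowers the $x$-degree. Counting zeros along a row, however, uses the $y$-degree, which is still $k_j-1$. Meanwhile row $2$ of $D$ contains the $a=2$ term of every $\sigma_i$ and, when $k_i<n$, also its $a=1$ term. The row-$1$ relation does not remove those $a=1$ windows from row $2$. There they reappear with weight $B^{(k_i)}_1(2)=\binom{n-2}{k_i-1}$ instead of $B^{(k_i)}_1(1)=\binom{n-1}{k_i-1}$, a ratio $(n-k_i)/(n-1)$ that depends on $k_i$ and is $0$ for $k_i=n$. In the example above with $j=4$, row $2$ of $D$ can have $6$ nonzero entries while $\sum_{i<4}m_i=4$, so the count stalls when $k_4\in\{n-5,n-4\}$.

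The fallback is circular. The functions $B^{(n)}_a(x)=\binom{x-1}{a-1}\binom{n-x}{n-a}$ form the Lagrange basis of $[n]$, so degree-$(n-1)$ Bernstein coefficients are just the matrix entries. The claim ``the top-degree part cannot cancel'' is then the lemma restated.

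\textbf{What is needed.} You need an argument that couples terms of different lengths across the whole matrix. In the $(n,n,n-1)$ case one can proceed as follows:
\begin{itemize}
\item The $(n-1)$-st $x$-difference of every column of $D$ vanishes. This forces $\sigma_2(n+1-a)=\sigma_1(a)$ wherever $\sigma_1(a)\ne\sigma_2(a)$, and $c_2=(-1)^nc_1$.
\item Row $1$ then forces $n$ even, $c_1=c_2$ and $\sigma_3(1)=n/2$.
\item Row $n$ likewise forces $\sigma_3(n-1)=n/2$, which is a contradiction.
\end{itemize}
Your proposal has no general step of this kind. The paper gives no proof of its own; it cites \cite{CDN2024} for a degree argument that concludes a partial sum vanishes. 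Your proposal reproduces only the boundary-row part of such an argument.
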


For irreducible non-vanishing squares with $r=6$, Lemma~\ref{lem:list-drop} implies $k_1=k_2=n$, $k_3 \ge n-1$, $k_4 \ge n-3$, and $k_5 \ge n-7$. Substituting these into Lemma~\ref{lem:list-high} gives $n \le 18$.  It follows that there are only finitely many candidate length lists.

\begin{table}[htbp]
\small
$$\begin{array}{|l|r|l|r|l|r|}
\hline
\multicolumn{2}{|c|}{n=4} & \multicolumn{2}{c|}{n=5} & \multicolumn{2}{c|}{n=6} \\
\hline
(4, 4, 4, 4, 4, 4) & 33  & (5, 5, 5, 5, 5, 2) & 4   & (6, 6, 6, 6, 5, 4) & 12  \\
(4, 4, 4, 4, 4, 3) & 39  & (5, 5, 5, 5, 5, 4) & 4   & (6, 6, 5, 5, 4, 4) & 43  \\
(4, 4, 4, 4, 4, 2) & 50  & (5, 5, 5, 5, 4, 4) & 10  & (6, 6, 5, 5, 4, 3) & 8   \\
(4, 4, 4, 4, 3, 3) & 36  & (5, 5, 5, 5, 4, 3) & 165 & (6, 6, 5, 5, 4, 2) & 19  \\
(4, 4, 4, 4, 3, 2) & 87  & (5, 5, 5, 5, 4, 2) & 2   & (6, 6, 5, 5, 3, 3) & 8   \\
(4, 4, 4, 3, 3, 3) & 5   & (5, 5, 5, 5, 3, 3) & 2   & (6, 6, 5, 5, 3, 2) & 16  \\
(4, 4, 4, 3, 3, 2) & 12  & (5, 5, 5, 5, 3, 2) & 8   & (6, 6, 5, 4, 4, 4) & 8   \\
(4, 4, 3, 3, 3, 3) & 4   & (5, 5, 5, 4, 4, 4) & 28  & (6, 6, 5, 4, 4, 3) & 8   \\
                   &     & (5, 5, 5, 4, 4, 3) & 22  & (6, 6, 5, 4, 4, 2) & 16  \\
                   &     & (5, 5, 5, 4, 4, 2) & 54  & (6, 6, 5, 4, 3, 3) & 8   \\
                   &     & (5, 5, 5, 4, 3, 3) & 8   & (6, 6, 5, 4, 3, 2) & 32  \\
                   &     & (5, 5, 5, 4, 3, 2) & 26  &                    &     \\
                   &     & (5, 5, 4, 4, 4, 4) & 70  &                    &     \\
                   &     & (5, 5, 4, 4, 4, 3) & 153 &                    &     \\
                   &     & (5, 5, 4, 4, 4, 2) & 162 &                    &     \\
                   &     & (5, 5, 4, 4, 3, 3) & 44  &                    &     \\
                   &     & (5, 5, 4, 4, 3, 2) & 136 &                    &     \\
                   &     & (5, 5, 4, 3, 3, 3) & 2   &                    &     \\
                   &     & (5, 5, 4, 3, 3, 2) & 13  &                    &     \\
\hline
\textbf{Totals:}   & \textbf{266} &  & \textbf{913} &  & \textbf{178} \\
\hline
\end{array}$$
\normalsize
\label{tab:unique6}
\caption{Count of unique solutions, up to $D_4$ equivalence, by length list.}
\end{table}

In our search, we first enumerated $966$ different length lists $(k_1,\dots,k_6)$ with $k_5 \ge 3$ and $k_6 \ge 2$ which are not rejected by either Lemma~\ref{lem:list-high} or \ref{lem:list-drop}.  Each remaining candidate length list is fed into a recursive search which proceeds one row at a time, looping over partial permutations of length $m \ge 1$ in $S_{k_1} \times \cdots \times S_{k_6}$.  Any cases that could not satisfy \eqref{eq:fuzzy2} on the top $m \times n$ submatrix were eliminated before incrementing $m$.  In many cases, especially for larger length lists, an entire tuple could be rejected at $m=1$.  More details on the search methods, including source code, can be found in the first author's thesis \cite{Joy2026} and github repository \cite{github}.

Given permutations $\sigma_1,\sigma_2,\dots$ that admit a solution $c_1,c_2,\dots$ to \eqref{eq:fuzzy2} with $c=1$, there are two possibilities.  Typically, the solution is unique, but occasionally there are infinitely many solutions.  In the latter case, the associated fuzzy latin squares are not irreducible, but we made note of these special solution families with $r=6$.

There are $1128960$ row-normalized latin squares of order six \cite{Frolov1890}.  These correspond to unique solutions for the length list $(6,\dots,6)$.  For other lists with $r=6$, our search identified 1357 unique solutions and several hundred infinite families of solutions producing non-vanishing fuzzy latin squares. Table~\ref{tab:unique6} gives a summary of unique solution counts for each length list.  Table~\ref{tab:positive} gives the irreducible six-term fuzzy latin squares (omitting latin squares) we found having positive coefficients, up to dihedral symmetry and scaling.  A full list of solutions, with permutations and coefficients (scaled as relatively prime integers), is available at \cite{github}. 

\begin{table}[htbp]
\small
$$\begin{array}{|l|}
\hline
5(\perm{1243}) + 4(\perm{1324}) + 5(\perm{2134}) + 4(\perm{2143}) + 6(\perm{3412}) + 4(\perm{321})\\
5(\perm{1234}) + 4(\perm{1324}) + 3(\perm{2143}) + 6(\perm{2413}) + 6(\perm{3142}) + 4(\perm{321})\\
4(\perm{1324}) + 5(\perm{1432}) + 9(\perm{2143}) + 5(\perm{3214}) + 1(\perm{3412}) + 4(\perm{321})\\
2(\perm{1234}) + 4(\perm{1324}) + 3(\perm{1432}) + 6(\perm{2143}) + 3(\perm{3214}) + 3(\perm{21})\\
5(\perm{1243}) + 4(\perm{1324}) + 5(\perm{2134}) + 1(\perm{2143}) + 3(\perm{3412}) + 3(\perm{21})\\
5(\perm{1234}) + 4(\perm{1324}) + 3(\perm{2143}) + 3(\perm{2413}) + 3(\perm{3142}) + 3(\perm{21})\\
6(\perm{1324}) + 5(\perm{1432}) + 8(\perm{2143}) + 5(\perm{3214}) + 2(\perm{4231}) + 3(\perm{21})\\
2(\perm{1234}) + 3(\perm{1243}) + 4(\perm{1324}) + 3(\perm{2134}) + 4(\perm{231}) + 4(\perm{312})\\
12(\perm{1324}) + 15(\perm{1432}) + 24(\perm{2143}) + 15(\perm{3214}) + 8(\perm{123}) + 15(\perm{21})\\
12(\perm{1324}) + 15(\perm{1432}) + 24(\perm{2143}) + 15(\perm{3214}) + 8(\perm{321}) + 3(\perm{21})\\
6(\perm{1243}) + 12(\perm{1432}) + 3(\perm{2143}) + 9(\perm{4321}) + 8(\perm{213}) + 3(\perm{21})\\
5(\perm{1234}) + 4(\perm{1324}) + 3(\perm{2143}) + 4(\perm{132}) + 4(\perm{213}) + 6(\perm{21})\\
3(\perm{1234}) + 3(\perm{4321}) + 2(\perm{132}) + 2(\perm{213}) + 2(\perm{231}) + 2(\perm{312})\\
3(\perm{1324}) + 3(\perm{4231}) + 2(\perm{132}) + 2(\perm{213}) + 2(\perm{231}) + 2(\perm{312})\\
12(\perm{12345}) + 18(\perm{13425}) + 18(\perm{14235}) + 12(\perm{45312}) + 15(\perm{2143}) + 5(\perm{21})\\
18(\perm{13425}) + 18(\perm{14235}) + 12(\perm{15342}) + 12(\perm{42315}) + 15(\perm{2143}) + 5(\perm{21})\\
16(\perm{123456}) + 24(\perm{132546}) + 24(\perm{645231}) + 16(\perm{654321}) + 12(\perm{41352}) + 5(\perm{2413})\\
16(\perm{123456}) + 24(\perm{135246}) + 24(\perm{642531}) + 16(\perm{654321}) + 12(\perm{41352}) + 5(\perm{2413})\\
16(\perm{124356}) + 24(\perm{132546}) + 24(\perm{645231}) + 16(\perm{653421}) + 12(\perm{41352}) + 5(\perm{2413})\\
16(\perm{124356}) + 24(\perm{135246}) + 24(\perm{642531}) + 16(\perm{653421}) + 12(\perm{41352}) + 5(\perm{2413})\\
16(\perm{123456}) + 24(\perm{145236}) + 24(\perm{632541}) + 16(\perm{654321}) + 12(\perm{41352}) + 5(\perm{2413})\\
16(\perm{124356}) + 24(\perm{145236}) + 24(\perm{632541}) + 16(\perm{653421}) + 12(\perm{41352}) + 5(\perm{2413})\\
24(\perm{132546}) + 16(\perm{153426}) + 16(\perm{624351}) + 24(\perm{645231}) + 12(\perm{41352}) + 5(\perm{2413})\\
24(\perm{132546}) + 16(\perm{154326}) + 16(\perm{623451}) + 24(\perm{645231}) + 12(\perm{41352}) + 5(\perm{2413})\\
24(\perm{135246}) + 16(\perm{153426}) + 16(\perm{624351}) + 24(\perm{642531}) + 12(\perm{41352}) + 5(\perm{2413})\\
24(\perm{135246}) + 16(\perm{154326}) + 16(\perm{623451}) + 24(\perm{642531}) + 12(\perm{41352}) + 5(\perm{2413})\\
24(\perm{145236}) + 16(\perm{153426}) + 16(\perm{624351}) + 24(\perm{632541}) + 12(\perm{41352}) + 5(\perm{2413})\\
24(\perm{145236}) + 16(\perm{154326}) + 16(\perm{623451}) + 24(\perm{632541}) + 12(\perm{41352}) + 5(\perm{2413})\\
\hline
\end{array}$$
\normalsize
\label{tab:positive}
\begin{center}
\caption{Irreducible fuzzy latin squares with six terms (of mixed lengths) and positive coefficients.}
\end{center}
\end{table}

\section{Conclusion}
\label{sec:concl}

In this paper, we studied a fractional relaxation of latin squares involving permutations in $S_{\le n}$. These `fuzzy latin squares' form a vector space $\mathcal{F}_{\le n}$ of dimension $1!+2!+\cdots+n!-(n-1)^2$. The set of latin squares of order $n$ span a subspace $\mathcal{L}_n$.

We showed that, for large $n$, $\dim \mathcal{L}_n
=n!-(n-1)^2$.  We also confirmed this formula via computations for $n \le 11$.  This could be extended to larger values with more data on derangement frequencies in latin squares.  

The sequence $n!-(n-1)^2$ appears as OEIS entry \href{https://oeis.org/A372264}{A372264}.  The database identifies these values as the maximum number of distinct cards in a deck that has each card twice and for which the ``$n$ card trick" can be performed.  We could see no obvious connection with latin squares, but it is a noteworthy curiosity.

It may be interesting to determine a natural basis of $1!+2!+\dots+(n-1)!$ fuzzy latin squares for the orthogonal complement of $\mathcal{L}_n$ in $\mathcal{F}_{\le n}$.  In the concrete case $n=5$, a basis of $33$ fuzzy latin squares is reported in the first author's thesis \cite{Joy2026}. Our computations for small $n$ did not seem to reveal any pattern.

Regarding whether $\ell_n(\mathrm{id},\delta)$
is roughly constant over all derangements $\delta \in S_n$, it is possible that the positive semidefiniteness of $MM^\top$ may be useful.  In principle, a linear program could be crafted to study variables $x_\lambda:=\kappa(C_\lambda)$ subject to constraints \eqref{eq:eigenvalues} on the sum of variables and \eqref{eq:ls-sum} on nonnegative eigenvalues.

A row-normalized latin square of order $n$ is equivalent to a one-factorization of the graph $K_{n,n}$.  Each permutation $\sigma \in S_n$ encodes the $1$-factor $\{(i,\sigma(i)):i\in [n]\}$, where edges are listed as ordered pairs. Graphically, a fuzzy permutation matrix $P_\sigma^{\uparrow n}$, where $\sigma \in S_k$, acts as a superposition of all matchings of size $k$ in $K_{n,n}$ with the same order structure as $\sigma$.  If vertices are listed in increasing order, this also relates to the crossing pattern of edges; for instance, 
$P_{\tt 12}^{\uparrow n}$ combines all $2$-edge matchings which do not cross, while $P_{\tt 21}^{\uparrow n}$ does the same for $2$-edge matchings which do cross.  A fuzzy latin square using permutations $\sigma_1,\dots,\sigma_r$ can thus be viewed as a fractional factorization into matching `bundles' which carry the order structure of each $\sigma_i$.  With this viewpoint, it could be interesting to consider a multipartite extension, or in general to consider analogous relaxations of other graph factorization problems.

Another extension of our work could involve a notion of `orthogonality' for fuzzy latin squares.  Two latin squares $L,L'$, each with symbol set $[n]$, are \emph{orthogonal} if 
$\{(L_{ij},L'_{ij}):i,j \in [n]\} = [n]^2$. In terms of permutation matrices, this condition asks for the summands $P_\sigma$ in \eqref{eq:ls-sum} to be as disjoint as possible when comparing terms from $L$ and $L'$.  Stopping short of proposing an explicit definition, we suspect there is a reasonable extension of this for mixed-length permutations. Studying a fuzzy relaxation of mutually orthogonal latin squares could shed light on this challenging topic in design theory.

In \cite{CDN2024}, it was shown that the expression 
\begin{equation}
\label{eq:rho-star}
\rho^*=
\perm{123} + \perm{321} + \perm{2143} + \perm{3412} + \tfrac{1}{2}(\perm{2413}+\perm{3412}) \in \Q[S_{\le 4}]
\end{equation}
is quasirandom-forcing. The same paper shows
that no shorter expression with positive coefficients is quasirandom-forcing.  This was done through a classification and analysis of non-vanishing fuzzy latin squares with at most five terms,  Now, with the classification extended \cite{github} to six terms, a next question would be whether \eqref{eq:rho-star} is the unique six-term quasirandom-forcing expression.  Restricting to expressions with positive coefficients (which includes latin squares of order six) is a reasonable starting point.


\renewcommand{\appendixname}{}
\appendix

\section{Proof of Proposition~\ref{prop:parities}}

Here, we sketch a minor reworking of \cite[Theorem 6.4]{KPS2025} which gives an error term $o(1/n)$ sufficient for our Proposition~\ref{prop:parities}.  This, recall, is used to conclude that the eigenvalue $\theta_{1^n}$ of $MM^\top$ corresponding to the alternating character is positive.  To keep the argument mostly self-contained, we review a few key definitions and lemmas from Sections 7 and 8 of \cite{KPS2025}.

Let $P$ be an $n \times n$ partial latin square.
An \emph{intercalate} in $P$ is a $2 \times 2$ latin subsquare.  One can `switch' the two entries of an intercalate, and the result is a new partial latin square with the same non-empty cells.  Note that an intercalate switch applied to a latin square changes the permutation parity of exactly two rows.

An intercalate in $P$ is \emph{isolated} if it does not share an entry with any other intercalate in $P$.  A set of isolated intercalates is \emph{critical} if, after switching some subset of these, each of the resulting intercalates intersects a common newly created intercalate.  Observe that a critical set has size at most four, because its elements are disjoint and must intersect the same four cells of $P$.  Finally, an isolated intercalate in $P$ is \emph{stable} if it belongs to no critical set.  The following is a restatement of Lemma 7.3 in \cite{KPS2025}.

\begin{lemma}[\cite{KPS2025}]
Suppose $P_1$ is a partial latin square, and $P_2$ is obtained from $P_1$ by switching a stable intercalate $I$. Then the set of stable intercalates in $P_1$ and $P_2$
agree up to swapping symbols in $I$.
\end{lemma}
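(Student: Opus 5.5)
The plan is to prove the statement directly from the definitions, using that an intercalate switch is an involution. It changes only the four cells of $I$: write $I'$ for the switched intercalate in $P_2$, so that switching $I'$ in $P_2$ recovers $P_1$. Every $2\times 2$ subsquare avoiding the cells of $I$ is literally the same in $P_1$ and $P_2$. The phrase ``up to swapping symbols in $I$'' then just means that $I$ corresponds to $I'$ and every other intercalate corresponds to itself. The argument has three steps.

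\emph{Step 1: $P_2$ has the same intercalates, with $I'$ in place of $I$.} Suppose $P_2$ contained an intercalate $J\neq I'$ sharing a cell with $I'$. Such a $J$ cannot exist in $P_1$, since $I$ is isolated there, so $J$ is newly created by the switch. The singleton $\{I\}$ consists of an isolated intercalate, and after switching it the resulting intercalate $I'$ meets the new intercalate $J$. Hence $\{I\}$ is critical, which contradicts the stability of $I$. So the intercalates of $P_2$ are exactly those of $P_1$, with $I$ replaced by $I'$. As a consequence, isolation is preserved: each intercalate other than $I$ meets the same intercalates in both squares, and $I'$ is isolated in $P_2$.

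\emph{Step 2: critical sets correspond.} Let $C$ be a critical set in $P_2$ not containing $I'$. Let $T\subseteq C$ be the subset whose switching produces a new intercalate $K$ meeting all of the resulting intercalates. There are two cases.
\begin{itemize}
\item If $K$ avoids the cells of $I$, then switching $T$ in $P_1$ produces the same $K$, and $K$ is still new there. Hence $C$ is critical in $P_1$.
\item If $K$ meets a cell of $I$, switch $T\cup\{I\}$ in $P_1$. This yields the same configuration as switching $T$ in $P_2$. Then $C\cup\{I\}$ is critical in $P_1$, because all of its switched members meet $K$. This contradicts the stability of $I$, so this case cannot occur.
\end{itemize}
Critical sets in $P_2$ that contain $I'$ are treated by the same bookkeeping, replacing $I'$ by $I$. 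The roles of $P_1$ and $P_2$ are symmetric once we know that $I'$ is stable in $P_2$, and that follows from the same reasoning with $P_1,P_2$ interchanged. Consequently an intercalate $I_0\neq I$ is stable in $P_1$ if and only if it is stable in $P_2$, and $I$ is stable in $P_1$ if and only if $I'$ is stable in $P_2$.

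\emph{Main obstacle.} The delicate step is Step 2. One must check that the intercalate $K$ is genuinely ``newly created'' in the transferred setting, and not already present in the other square. One must also handle the degenerate possibility that $K$ coincides with $I$ or $I'$. I would resolve both points using the fact from Step 1 that the intercalates of $P_1$ and $P_2$ agree away from $I$. I would also use the size bound noted before the statement: a critical set has at most four members, since its disjoint members must all meet the four cells of $K$. This bound keeps the case analysis finite.
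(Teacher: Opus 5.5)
Your argument is correct. The paper gives no proof of its own here; it quotes Lemma 7.3 of \cite{KPS2025}. Your direct verification from the definitions in the appendix is the natural route. Step 1 uses that $\{I\}$ is not critical to show that the intercalates of $P_1$ and $P_2$ differ only by $I\leftrightarrow I'$. Step 2 transfers critical sets, and the only problematic transfers produce a critical set of $P_1$ containing $I$.

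Two points should be written out rather than left to ``bookkeeping''.

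\emph{Critical sets containing $I'$.} Suppose $C_0\cup\{I'\}$ is critical in $P_2$, via a switched subset $T$ and a new intercalate $K$. Then $C_0\cup\{I\}$ is critical in $P_1$: switch $T\setminus\{I'\}$ if $I'\in T$, and $T\cup\{I\}$ if $I'\notin T$. Either way you obtain the same square and the same list of resulting intercalates as in $P_2$. The resulting contradiction with the stability of $I$ is exactly what shows that $I'$ is stable in $P_2$. State it that way. Do not say it ``follows from the same reasoning with $P_1,P_2$ interchanged'', since that reasoning presupposes it.

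\emph{Newness of $K$ in $P_1$.} In every case $K\neq I'$, because $I'$ lies in $P_2$ and $K$ does not. Also $K\neq I$: either $I$ is itself one of the resulting intercalates, or the cells of $I$ still carry the symbols of $I'$. So Step 1 gives $K\notin P_1$. This uses the reading that a ``newly created'' intercalate must differ from the resulting intercalates. That reading is forced, since otherwise every isolated intercalate would be a critical singleton and nothing would be stable.

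Finally, the bound of four on the size of critical sets plays no role in the argument.
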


The next step is to show that a latin square chosen uniformly at random has a sufficient density of stable intercalates.  For $1 \le l \le n$ and $0 <\beta < 1$, an $n \times n$ partial latin square $P$ is an $(l,\beta)$-\emph{intercalate expander} if the following holds: for any sets of rows $R,R'$, any sets of columns $C,C'$, and any sets of symbols $S,S'$ such that five out of six of these have size $\beta n$ and the last has size $l$, there exists a stable intercalate in $P$ with one row in $R$ and the other row in $R'$, one column in $C$ and the other column in $C'$, and one symbol in $S$ and the other symbol in $S'$. An $n \times n$ \emph{template} is a subset $T \subseteq [n]^2$.  For a latin square $L$ of order $n$, $T \cap L$ denotes the partial latin square that agrees with $L$ on the indices of $T$ and whose other cells are blank. We now give a restatement of Lemma 8.3 of \cite{KPS2025}.

\begin{lemma}[\cite{KPS2025}]
\label{lem:temp-exp}
Let $l=\log^{11} n$ and suppose $0<\beta<1$.  There exists a template $T \subset [n]^2$ such that a uniformly random $n \times n$ latin square $L$ satisfies
$$\mathbb{P}[T \cap L \text{ is an }(l,\beta)\text{-intercalate expander}] \ge 1-\exp(-\omega(n \log^2 n)).$$
\end{lemma}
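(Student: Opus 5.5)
The plan is to construct $T$ at random, with each cell of $[n]^2$ included independently with a small constant probability $p=p(\beta)$. We then fix one good outcome of $T$. The point of sparsifying is to make typical intercalates of $T\cap L$ isolated and stable: an intercalate of $T \cap L$ uses four cells of $T$, while any conflict (a second intercalate sharing an entry, or a potential critical configuration) needs extra cells of $T$ and so costs extra factors of $p$. The event in question has two sources of randomness, $T$ and $L$. The argument is an averaging step: we show that for a random pair $(T,L)$ the failure probability is at most $\exp(-\omega(n\log^2 n))$ in expectation over $T$. Markov's inequality then gives a fixed $T$ with essentially the same bound for uniformly random $L$.

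Next, we reduce to a single choice of sets by a union bound. The number of choices of $(R,R',C,C',S,S')$, with five of size $\beta n$ and one of size $l$, is at most $2^{6n}\binom{n}{l}=\exp(O(n))$. So it suffices to show, for each fixed choice, that
$$\mathbb{P}[\text{no stable intercalate of } T\cap L \text{ of the required type}]\le \exp(-\omega(n\log^2 n)).$$
For a fixed choice, the expected number of admissible intercalates of $L$ with all four cells in $T$ is of order $p^4\beta^5 l n$. This is heuristic, treating $L$ as random-like: there are $\beta^5 l n^4$ index choices and an intercalate has probability about $n^{-3}$. Intercalates of $T \cap L$ that are non-isolated or lie in a critical set need at least one additional intercalate or configuration sharing cells. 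Their expected number carries an extra factor of $p^{2}$ or more, and we choose $p$ small enough that these are at most a small fraction of the total.

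The main obstacle is making these counts hold for a uniformly random latin square with the required tail probability, since $L$ has no independence. I would use the spread or upper-tail estimates for random latin squares, as in Kwan, Sah and Sawhney. These say that any prescribed set of $k$ filled cells appears in $L$ with probability at most $(O(1)/n)^{k}$. Combined with a switching argument, they also give a lower tail for the number of intercalates in a prescribed box, at rate $\exp(-\Omega(\text{mean}))$. The relevant mean is $\Theta(ln)=\Theta(n\log^{11} n)$, so a lower-tail bound of the form $\exp(-\Omega(ln))$ is $\exp(-\omega(n\log^2 n))$, as needed. This is exactly why $l=\log^{11}n$ is chosen.

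To get concentration, I would first pass to a maximal family of pairwise cell-disjoint admissible intercalates in $T\cap L$, using a switching or martingale argument that exposes $L$ row by row. I would then discard those touched by bad configurations. The number of discarded ones is controlled by an upper-tail bound on the count of bad configurations, via the spread estimate and a moment (deletion-method) argument. What survives is a positive number of stable intercalates of the required type, outside an event of probability $\exp(-\omega(n\log^2 n))$. This concludes the union bound and gives Lemma~\ref{lem:temp-exp}.
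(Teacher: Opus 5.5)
\textbf{Verdict: genuine gap.} The paper does not prove this lemma. It quotes it as Lemma 8.3 of \cite{KPS2025} and uses it as a black box in the appendix, so your sketch has to stand on its own.

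The outer layer of your sketch is reasonable:
\begin{itemize}
\item a random template of constant density $p$, fixed by averaging;
\item a union bound over the $\exp(O(n))$ choices of $(R,R',C,C',S,S')$;
\item first-moment heuristics: mean $\Theta(p^4\beta^5 ln)$ for admissible intercalates of $T\cap L$, with extra powers of $p$ for non-isolated intercalates and critical configurations.
\end{itemize}
But the lemma rests on one estimate that you assert rather than prove. For a fixed box with one side of size $l$, a uniformly random latin square $L$ must contain $\Omega(ln)$ cell-disjoint admissible intercalates except with probability $\exp(-\omega(n\log^2 n))$. The randomness of $T$ cannot supply this. Conditional on $L$, a Chernoff bound over $T$ only helps once $L$ already has many intercalates in the box.

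None of the tools you name delivers this estimate.
\begin{itemize}
\item \emph{Spread.} Bounds of the form $\mathbb{P}[P\subseteq L]\le (O(1)/n)^{|P|}$ are upper bounds on containment probabilities. They control moments and hence upper tails, but nothing in them forces a configuration to be present, so they give no lower tail.
\item \emph{Known lower tails.} The available lower-tail results for intercalates concern the global count, at scale $\exp(-\Theta(n^2))$. They do not localize to a box of width $\log^{11}n$ at the probability scale needed here.
\item \emph{Row-by-row martingale.} You do not specify this argument, and its bounded-differences (Azuma) form provably falls short. The counted intercalates are spread over at most $n$ rows. Revealing a row can shift the conditional expectation by about the number of counted intercalates through that row, so the Lipschitz constants satisfy $\sum_i c_i\gtrsim\mu$. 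Hence $\sum_i c_i^2\gtrsim \mu^2/n$, and Azuma gives at best $\exp(-O(n))$; if the short side is the row set, only $\exp(-O(l))$. Reaching $\exp(-\omega(n\log^2 n))$ would need fine, variance-sensitive control of the conditional law of a row of a uniform latin square given the other rows. That is the real content of the lemma, and it is absent from your plan.
\end{itemize}

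Two smaller points.
\begin{itemize}
\item Your explanation of $l=\log^{11}n$ does not hold up. A lower tail of the form $\exp(-\Omega(\text{mean}))$ would make any $l=\omega(\log^2 n)$ suffice. The exponent $11$ suggests that the argument in \cite{KPS2025} loses polylogarithmic factors.
\item In the deletion step, you cannot bound the number of discarded intercalates through a per-cell degree bound. Spread bounds the probability that a fixed cell lies in at least $t$ intercalates only by roughly $(O(1)/t)^t$. That is $\exp(-\omega(n\log^2 n))$ only for $t\gtrsim n\log n$, which is vacuous since a cell lies in at most $n-1$ intercalates. So an aggregate moment count over whole configurations is required there.
\end{itemize}
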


Our last tool is Lemma 8.5 from \cite{KPS2025}, which uses the intercalate expander property to obtain a nearly full set of rows, columns, and symbols on which stable intercalate switches thoroughly mix up the permutation parities.

\begin{lemma}[\cite{KPS2025}]
\label{lem:incidence}
Suppose $P$ is an $(l,1/10)$-intercalate expander, where $l=o(\sqrt{n})$. Then there exist sets $R,C,S$ of rows, columns, and symbols, each of size at least $n-l$, with the following property.  The incidence matrix of $R \cup C \cup S$ versus stable intercalates in $P$ has left kernel over $\mathbb{F}_2$ of dimension $3$, with a basis being $\{(\vec{1},\vec{0},\vec{0}),(\vec{0},\vec{1},\vec{0}),(\vec{0},\vec{0},\vec{1})\}$.
\end{lemma}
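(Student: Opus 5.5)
This is Lemma 8.5 of \cite{KPS2025}, so I would follow its proof, recast in the language above. Work over $\mathbb{F}_2$. A vector in the left kernel is a triple of indicator vectors $(a,b,c)$ on $R$, $C$, $S$ such that every stable intercalate of $P$ meets the chosen rows, columns and symbols an even number of times in total. An intercalate uses two rows, two columns and two symbols. Hence the three vectors $(\vec 1,\vec 0,\vec 0)$, $(\vec 0,\vec 1,\vec 0)$, $(\vec 0,\vec 0,\vec 1)$ lie in the kernel, provided $R,C,S$ are chosen so that every stable intercalate we keep has both rows in $R$, both columns in $C$ and both symbols in $S$. I would arrange this by restricting to the stable intercalates fully contained in $R\times C\times S$. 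These three vectors are clearly independent, so the content is the reverse inclusion.

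\textbf{Choosing $R,C,S$.} Call a row (column, symbol) \emph{poorly connected} if it lies in fewer than the expected number of stable intercalates. I would remove poorly connected elements iteratively. The expander property with one set of size $l$ and five of size $n/10$ should show that at most $l$ elements of each type are ever removed; otherwise the removed set would itself serve as the size-$l$ set and contradict the expander property. Then $|R|,|C|,|S|\ge n-l$, and every surviving element has many stable intercalates inside $R\times C\times S$ linking it to any linear-size subsets of the other coordinates.

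\textbf{Kernel argument.} Let $(a,b,c)$ be in the kernel. First I would show $a$ is constant on $R$. Split $R=R_0\cup R_1$ by the value of $a$, and suppose both parts are nonempty. Suppose also that $b$ and $c$ each have some value class of size at least $n/10$; one of the two classes has size at least $|C|/2$, so this holds. Apply the expander property with the smaller of $R_0,R_1$ as one set (shrinking or padding to size $l$ or $n/10$ as appropriate) and the other row class as a second set. Take the column sets and symbol sets inside single value classes of $b$ and $c$, so that the column and symbol contributions are even. This yields a stable intercalate with one row from each class, so its total parity is odd, a contradiction.

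The subtle case is when the minority class of $a$ has size between $l$ and $n/10$, or when the classes for $b,c$ must be taken as pairs across classes. Here I would use mixed choices: one column from each $b$-class to flip parity back. Casework on which of $a,b,c$ are nonconstant should close the argument. Once $a$, $b$ and $c$ are each constant, $(a,b,c)$ lies in the span of the three basis vectors.

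\textbf{Main obstacle.} I expect the main obstacle to be the pruning step. The stable intercalates produced by the expander property must lie inside $R\times C\times S$, and removing elements must not destroy stability or the expander property for the remaining sets. The bound $l=o(\sqrt n)$ should control the cascade of removals. I would handle this by choosing a maximal ``bad'' set of size at most $l$ in one shot via the expander property, rather than pruning iteratively.
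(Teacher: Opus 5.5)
The paper does not prove this lemma; it quotes Lemma 8.5 of \cite{KPS2025}. So I assess your outline on its own terms. \textbf{There is a genuine gap:} the case of a minority class of size less than $l$ is never handled, and the pruning step meant to handle it does not work.

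\textbf{What is fine.} The easy inclusion is correct, with your reading of the matrix as restricted to stable intercalates inside $R\times C\times S$. The basic use of the expander is also correct. If the minority class of $a$ has at least $l$ elements, take an $l$-subset of it, an $n/10$-subset of the majority class of $a$, and $n/10$-subsets of the majority classes of $b$ and of $c$, each used for both of its slots. The resulting stable intercalate has odd total, a contradiction. This already disposes of the cases you call subtle. Minority classes of size between $l$ and $n/10$ are exactly what the size-$l$ slot is for. The majority classes of $b,c$ always have size at least $(n-l)/2\ge n/10$, so mixed column or symbol choices are never needed.

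\textbf{Where it breaks.} The real difficulty is a minority class of size \emph{less} than $l$, e.g.\ a single row $r\in R$ with $a_r=1$ and $a=0$ on $R\setminus\{r\}$.
\begin{itemize}
\item Padding the class up to size $l$ with majority rows fails: the intercalate the expander returns may use only padded rows, which gives even parity.
\item What you want for survivors of the pruning is that every single element is joined by a stable intercalate to any five linear-size sets. That is the expander property with $l$ replaced by $1$, and the hypothesis does not give it. There is also no reason to expect it: in a random latin square a positive fraction of the cells of a given row typically lie in no intercalate, so one can choose $C=C'$ of size $n/10$ avoiding every intercalate through that row.
\item Your bound on the number of removals does not work. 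The expander property needs one common choice of the five large sets for the whole $l$-set. Different poorly connected elements have different witnesses, and a degree condition supplies none.
\item The cascade caused by restricting to $R\times C\times S$ is left unresolved.
\end{itemize}

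\textbf{The missing idea: let the kernel decide what to delete.}
\begin{enumerate}
\item Start with $R=C=S=[n]$. While the current sets have size at least $n-2l$, the argument above shows each part of a kernel vector is within Hamming distance less than $l$ of a constant. So if the kernel is larger than the span of the three basis vectors, adding suitable basis vectors gives a nonzero vector $w$ whose three parts all have weight less than $l$.
\item Delete $\mathrm{supp}(w)$ from $R,C,S$; this removes fewer than $l$ elements of each type. Recompute the kernel for stable intercalates inside the new box, and repeat.
\item Key observation: suppose $x$ was deleted via $w$. Then no stable intercalate through $x$ has its other five coordinates among elements surviving that deletion. Such an intercalate lay in the box when $w$ was chosen, and $w$ vanishes off its support, so its parity check would be $w_x=1\neq 0$.
\item Consider the first moment when $l$ elements of one type, say rows, have been deleted. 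All current sets still have size greater than $n-2l$. Applying the expander property to those $l$ rows and to $n/10$-subsets of the current sets produces exactly such a forbidden intercalate, a contradiction.
\end{enumerate}
Hence fewer than $l$ elements of each type are ever deleted, and when the process stops the kernel is exactly the span of the three basis vectors.
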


The first kernel vector means that stable intercalate swaps within $R$ cannot change the total number of odd rows in $R$.  The other two kernel vectors impose similar constraints on columns and symbols.  However, the kernel having only these vectors means that independently swapping stable intercalates in $P$ produces uniformly random parities between any two rows, provided at least one belongs to $R$.
This is discussed in more detail in Section 8 of \cite{KPS2025}.

We can now closely follow the proof of \cite[Theorem 6.4]{KPS2025} while keeping track of an error bound.
Let $\beta=1/10$.  Take a template $T$ and the value $l$ as given in Lemma~\ref{lem:temp-exp}.  Sample a uniformly random latin square $L$.  The probability that $P=T \cap L$ is not an $(l,\beta)$-intercalate expander is at most $\exp(-\omega(n \log^2 n))=o(1/n)$. Apply Lemma~\ref{lem:incidence} to obtain a set $R$ of at least $n-l$ rows on which switching stable intercalates achieves the row parity mixture discussed above.
The probability that two specific rows (say the first two) lie outside of $R$ is at most $(l/n)^2=o(1/n)$.  
Now, re-randomize $L$ by switching each of its stable intercalates independently at random with probability $1/2$.  The result is a new, still uniformly random, latin square $L'$ in which the first two rows have equal parity with probability $1/2$.

\section*{Acknowledgments}

We are grateful to the authors of \cite{KPS2025} for guidance on sketching the proof of Proposition~\ref{prop:parities}, and to the authors of \cite{KLN2024} for sharing Example~\ref{ex:pc}.


\begin{thebibliography}{99}

\bibitem{Bir1946}
G. Birkhoff,  Tres observaciones sobre el algebra lineal, Univ. Nac. Tucum\'an Rev. Ser. A 5 (1946) 147--151.

\bibitem{CGW2008}
N.~Cavenagh, C.~Greenhill and I.~Wanless, The cycle structure of two rows in a random latin square. \emph{Random Struct. Algorithms} 33 (2008), 286--309.

\bibitem{Joy2026}
J.~Cooper, Fuzzy latin squares as balanced linear combinations of mixed-length permutations,
MSc thesis, University of Victoria, 2026.

\bibitem{github}
J.~Cooper, Repository on fuzzy latin squares,
\url{https://github.com/JoyCooper/FuzzyLatinSquaresWithRTerms}.

\bibitem{CDN2024}
G.~Crudele, P.J.~Dukes and J.A.~Noel,
Six permutation patterns force quasirandomness. 
\emph{Discrete Analysis} 8 (2024), 26 pp.

\bibitem{Delsarte1973}
Ph.~Delsarte, An algebraic approach to the association schemes of coding theory. \emph{Philips Res. Rep.
Suppl.} 10 (1973).

\bibitem{Frolov1890}
M.~Frolov, Sur les permutations carr\'es, \emph{J. de Math. Sp\'ec.} IV (1890), 25--30.

\bibitem{KLN2024}
D.~Kr\'al', J.-B.~Lee and J.A.~Noel, 
Forcing quasirandomness with 4-point permutations, preprint \url{https://arxiv.org/abs/2407.06869}.

\bibitem{KPS2025}
M.~Kwan, K.~Petrova and M.~Sawhney, Parities in random latin squares, preprint \url{https://arxiv.org/abs/2509.13125}.

\bibitem{LS2008}
M.~Larsen and A.~Shalev, Characters of symmetric groups: Sharp bounds and applications. \emph{Inventiones Math.} 174 (2008), 645--687.

\bibitem{Sagan2001}
B.E.~Sagan, The Symmetric Group, 2nd Edition.. Graduate Texts in Mathematics Vol. 203, Springer, 2013.

\end{thebibliography}
\end{document}